\newcommand{\df}{\dfrac}
\newcommand{\tf}{\tfrac}
\renewcommand{\i}{\infty}
\newcommand{\beqs}{\begin{equation*}}
\newcommand{\eeqs}{\end{equation*}}
\numberwithin{equation}{section}
 \theoremstyle{plain}
\newtheorem{theorem}{Theorem}[section]
\theoremstyle{remark}
\begin{document}

\makeatletter
\def\imod#1{\allowbreak\mkern10mu({\operator@font mod}\,\,#1)}
\makeatother

\author{Alexander Berkovich}
   \address{Department of Mathematics, University of Florida, 358 Little Hall, Gainesville FL 32611, USA}
   \email{alexb@ufl.edu}

\author{Frank Patane}
   \address{Department of Mathematics, University of Florida, 358 Little Hall, Gainesville FL 32611, USA}
   \email{frankpatane@ufl.edu}

\title[\scalebox{.9}{Binary quadratic forms and the Fourier coefficients of certain weight 1 eta-quotients}]{Binary Quadratic forms and the Fourier coefficients of certain weight 1 eta-quotients}
     
\begin{abstract} 
We state and prove an identity which represents the most general \textit{eta}-products of weight 1 by binary quadratic forms. We discuss the utility of binary quadratic forms in finding a multiplicative completion for certain \textit{eta}-quotients. We then derive explicit formulas for the Fourier coefficients of certain \textit{eta}-quotients of weight 1 and level $47, 71, 135, 648, 1024,$ and $1872$.
\end{abstract}

\keywords{\textit{eta}-quotients, binary quadratic forms, theta series, multiplicative functions, Hecke operators, dissections of q-series}

   \subjclass[2010]{11B65, 11F03, 11F11, 11F20, 11F27, 11E16, 11E20, 11E25, 14K25}

\date{\today}
   
\maketitle

   \section{Introduction and Notation}
\label{intro}

Throughout the paper we assume $q$ is a complex number with $|q|<1$. We use the standard notations
   \begin{equation}
   \label{poch}
   (a;q)_\i:= \prod_{n=0}^{\infty}(1-aq^n)
   \end{equation}
	and
   \begin{equation}
   \label{E}
   E(q):= (q;q)_\i.
   \end{equation}
   Next, we recall the Ramanujan theta function
   
   \begin{equation}
   \label{fdef}
   f(a,b):= \sum_{n=-\infty}^{\infty}a^{\tf{n(n+1)}{2}}b^{\tf{n(n-1)}{2}}, \hspace{1cm} |ab|<1.
   \end{equation}
	
  \noindent 
   The function $ f(a,b) $ satisfies the Jacobi triple product identity \cite[Entry 19]{ramnote}
\begin{equation}
\label{jtp}
f(a,b) = (-a;ab)_{\i}(-b;ab)_{\i}(ab;ab)_{\i},
\end{equation}
   along with
	\begin{equation}
\label{fshift}
f(a,b) =a^{\tf{n(n+1)}{2}}b^{\tf{n(n-1)}{2}}f(a(ab)^n,b(ab)^{-n}),
\end{equation}

\noindent
where $n \in \mathbb{Z} $ \cite[Entry 18]{ramnote}.
   One may use \eqref{jtp} to derive the following special cases:
   
   \begin{equation}
   \label{pent}
   E(q) =f(-q,-q^2)= \sum_{n=-\infty}^\infty (-1)^n q^{\tf{n(3n-1)}{2}},
   \end{equation}
   \begin{equation}
   \label{phi}
   \phi(q):=f(q,q)= \sum_{n=-\infty}^{\infty}q^{n^2} = \df{E^5(q^2)}{E^2(q^4)E^2(q)},
   \end{equation}
   \begin{equation}
   \label{psi}
   \psi(q):=f(q,q^3)= \sum_{n=-\infty}^{\infty}q^{2n^2-n} = \df{E^2(q^2)}{E(q)},
   \end{equation}
   \begin{equation}
   \label{f12}
   f(q,q^2)= \df{E^2(q^3)E(q^2)}{E(q^6)E(q)},
   \end{equation}
   \begin{equation}
   \label{f15}
   f(q,q^5)= \df{E(q^{12})E^2(q^2)E(q^3)}{E(q^6)E(q^4)E(q)}.
   \end{equation}
	
   Note that \eqref{pent} is the famous Euler pentagonal number theorem.
   Splitting \eqref{pent} according to the parity of the index of summation, we find
   
   \begin{equation}
   \label{pentcor}
   E(q)= \sum_{n=-\infty}^{\infty}q^{6n^2 -n}-q\sum_{n=-\infty}^{\infty}q^{6n^2 +5n} =f(q^5,q^7) -qf(q,q^{11}).
   \end{equation}
   We will employ the similarly derived relations
   \begin{equation}
   \label{mod31}
   \phi(q)= \phi(q^9) +2qf(q^3,q^{15}),
   \end{equation}
   \begin{equation}
   \label{phieven}
   \phi(q) = \phi(q^4) +2q\psi(q^8),
   \end{equation}
\begin{equation}
	\label{psipsiaux}
\psi(-q) = f(q^6,q^{10}) -qf(q^2,q^{14}),
\end{equation}
and
   \begin{equation}
   \label{mod32}
   \psi(q)=f(q^3,q^6) +q\psi(q^9).
   \end{equation}
   We comment that the relation
	\[
	E(-q) = \df{E^3(q^2)}{E(q^4)E(q)}
	\]
	can be used to deduce
	\[
	\phi(-q)=\df{E^2(q)}{E(q^2)},
	\]
	and
	\[
	\psi(-q)=\df{E(q)E(q^4)}{E(q^2)}.
	\]
	
      We now recall some notation from elementary number theory. Given an integer $n$, we use the term $\operatorname{ord}_{p}(n)$ to represent the unique integer with the properties $p^{\operatorname{ord}_{p}(n)} \mid n$ and $p^{1+\operatorname{ord}_{p}(n)} \nmid n$.\\
      \noindent
   For an odd prime $p$, Legendre's symbol is defined by 
   \[
   \left(\df{n}{p}\right) = \left\{ \begin{array}{ll}
       1 &  \mbox{ if $n$ is a quadratic residue modulo $p$ and $p \nmid n$}, \\
       -1 &  \mbox{ if $n$ is a quadratic nonresidue modulo $p$ and $p \nmid n$}, \\
       0 &  \mbox{ if $p \mid n$ }.
     \end{array}
     \right.
   \]
   \noindent
   Kronecker's symbol $\left(\df{n}{m}\right)$ is defined by
   \[
   \left(\df{n}{m}\right) = \left\{ \begin{array}{ll}
        1&  \mbox{ if $m=1$ }, \\
       0&  \mbox{ if $m$ is a prime dividing $n$},\\
       \mbox{Legendre's symbol} &  \mbox{ if $m$ is an odd prime},
     \end{array}
     \right.
   \]
   \[
   \left(\df{n}{2}\right) = \left\{ \begin{array}{ll}
        0&  \mbox{ if $n$ is even }, \\
       1&  \mbox{  $n \equiv \pm1 \pmod{8}$},\\
       -1&  \mbox{  $n \equiv \pm3 \pmod{8}$},
     \end{array}
     \right.
   \]
   and in general $\left(\df{n}{m}\right) = \prod_{i=1}^s \left(\df{n}{p_i}\right)$, where $m=\prod_{i=1}^s p_i$ is a prime factorization of $m$.

   	We recall some well-known facts from the theory of binary quadratic forms. For $a,b,c \in \mathbb{Z}$, we call $(a,b,c):= ax^2 +bxy +cy^2$ a binary quadratic form of discriminant $d:=b^2-4ac$.  We are only interested in the case $d<0$ and $a>0$, and the forms in this case are called positive definite. From now on we will use the abbreviated term ``form" to refer to a positive definite binary quadratic form. If we set $g(x,y) = ax^2 +bxy +cy^2$, then the binary quadratic form $h(x,y):=g(\alpha x +\beta y, \gamma x + \delta y)$ is called equivalent to $g(x,y)$, when $\left( \begin{array}{cc}
\alpha & \beta\\
\gamma & \delta \end{array} \right)$ is in SL$(2,\mathbb{Z})$. We write $g(x,y) \sim h(x,y)$ and note that $\sim$ is an equvalence relation on the set of binary quadratic forms of discriminant $d$. We define $H(d)$ to be the set of all binary quadratic forms of discriminant $d$ modulo the equivalence relation $\sim$. It is well-know that $H(d)$ has a group structure, and is called the class group of dicriminant $d$ \cite{buell}. The conductor of $d$ is the largest integer $f$ such that $\tf{d}{f^2}$ is also a discriminant of binary quadratic forms.\\

  The form $(a,b,c)$ has the associated theta series
   \begin{equation}
   \label{defB}
   B(a,b,c,q):=\sum_{x,y}q^{ax^2 +bxy +cy^2}= \sum_{n \geq 0}(a,b,c,n)q^n.
   \end{equation}
	Note that $(a,b,c,n)=0$ whenever $n \notin \mathbb{Z}_{\geq 0}$.\\
	
	Let $w(d) = 6, 4,$ or $ 2$ if $d= -3, -4$ or $d<-4$, respectively. The prime $p$ is represented by a form of discriminant $d$ if and only if $p\nmid f$ and $\left(\tf{d}{p}\right)=0,1$. Let us take $(a,b,c)$ to be a form of discriminant $d$ which represents $p$. Since $(a,b,c)$ and $(a,-b,c)$ represent the same integers, we see $(a,-b,c)$ must also represent $p$. Apart from $(a,b,c)$ and $(a,-b,c)$, no other class of forms of discriminant $d$ will represent $p$. If $p\nmid f$ and $\left(\tf{d}{p}\right)=0$, then $(a,b,c,p) = w(d)$.  If $\left(\tf{d}{p}\right)=1$, then $(a,b,c,p)=w(d)$ or $2w(d)$ when $(a,b,c) \nsim (a,-b,c)$ or $(a,b,c)\sim (a,-b,c)$, respectively \cite{dickson}.\\

   We will need a method to determine which primes $p$ are represpresented by a given form. Besides using basic congruence conditions from genus theory, see \cite{dickson}, we follow \cite{cox} and \cite{voight} by employing the Weber class polynomial. For a form $(a,b,c)$ with discriminant $d$, we let $\tau = \tf{-b+\sqrt{d}}{a}$, and recall that the Weber class polynomial, $W_d$, is defined as the minimal polynomial of $f(\tau)$, where $f$ is a particular normalized Weber function generating the same class field as Klein's modular function $j(\tau)$. (See \cite{cox}, \cite{voight} for details.)\\	
	
We will give examples where the factorization pattern of $W_d \imod{p}$ suffices to determine if $(a,b,c)$ represents $p$. When the factorization pattern of $W_d \imod{p}$ does not suffice, we use rem$(z^p,W_d(z))\imod{p}$.  Here and throughout the manuscript, rem$(z^p,W_d(z))\imod{p}$ denotes the remainder of $z^p$ upon division by $W_d(z)$ modulo $p$. If $\left(\tf{d}{p}\right)=1$, an analysis of rem$(z^p,W_d(z))\imod{p}$ always enables us to determine which form of discriminant $d$ represents $p$.\\

   An \textit{eta}-quotient is a finite product 
		\[
	H(q):=q^j\prod_{i=1}^{M}E^{r_{i}}(q^{s_{i}}),
	\]
	where $s_1,\ldots, s_M$ are positive integers, $r_1,\ldots, r_M$ are non-zero integers, and\\ $j = \sum_{i=1}^{M}\tf{r_{i}s_{i}}{24} \in \mathbb{Z}$. We call $H$ an \textit{eta}-quotient since $q^{\tf{1}{24}}E(q)=\eta(q)$, where $\eta(q)$ is the Dedekind eta function. The weight of $H$ is defined to be $\sum_{i=1}^{M}\tf{r_{i}}{2}.$ The level $N$ of $H$ is defined to be the smallest multiple of lcm$(s_1,\ldots, s_M)$ such that $\sum_{i=1}^{M}\tf{r_{i}N}{s_{i}} \equiv 0 \imod{24}.$  
   $H$ has a Fourier expansion, and we use the notation $[q^n]H(q)$ to denote the coefficient of $q^n$ in the expansion of $H$. We call a series $\sum_{n>0}a(n)q^n$ multiplicative when $a(n)$ is multiplicative. It is a continuing area of research to determine the Fourier coefficients of certain \textit{eta}-quotients. (See \cite{berk1}, \cite{berk2}, \cite{ccl}, \cite{ernest}, \cite{kohler}, and \cite{williams}.) \\
  
   Let $Q$ be a form of discriminant $d=d_0\cdot f^2$ with conductor $f$. Let $\vartheta_Q=\sum_{n \geq 0} h(n)q^n$ be the associated theta series for $Q$. In \cite{hecke} Hecke defines the operator $T_p$ by
   \begin{equation}
   \label{heckedef}
   T_p \left( \sum_{n=0}^\i h(n)q^n \right)= \sum_{n=0}^\i \left[h(pn)+ \left(\tf{d}{p}\right)h(n/p)\right]q^n.
   \end{equation}

	Let $S$ be the set of all linear combinations of theta series associated to the forms of discriminant $d$. It is well-known that for $s\in S$, we have $T_p (s) \in S $ \cite{hecke}. Moreover, Hecke gives explicit information regarding which theta series are involved in the linear combination $T_p(s)$ \cite[p.~794; (7)]{hecke}. In the case that $p$ is represented by a form of discriminant $d$, we define $Q_0$ to be such a form. It can be shown that
	  \begin{equation}
      T_p(\vartheta_Q) = \left\{ \begin{array}{ll}
       \vartheta_{Q\cdot Q_0} + \vartheta_{Q\cdot Q^{-1}_0} &  \left(\tf{d}{p}\right)=1, \\
       \vartheta_{Q\cdot Q_0} & p \mid d, p \nmid f , \\
			 F(q^{p}) & p \mid f, \\
       0 &   \left(\tf{d}{p}\right)=-1, \\
     \end{array}
     \right.
\end{equation}
	where the binary operation $\cdot$ is Gaussian composition of forms, and $F(q)=\vartheta_{Q_1}$ where $Q_1$ is a particular form of discriminant $\tf{d}{p^2}$.\\
	
    When $T_p (s) =s\cdot \lambda_p$ for some constant $\lambda_p$, we call $s$ an eigenform of $T_p$ with eigenvalue $\lambda_p$. If $\sum h(n) q^n$ is an eigenform for all Hecke operators, then \eqref{heckedef} implies
   \begin{equation}
   \label{heckedef2}
   \lambda_p h(n)= h(pn)+ \left(\tf{d}{p}\right)h(n/p)
   \end{equation}
   for any positive integer $n$ and any prime $p$. Taking $n=1$ yields $\lambda_p\cdot h(1) = h(p)$, and we will always take $h$ appropriately normalized so that $h(1)=1$.	Hence the eigenvalues of all $T_p$ are exactly the evaluations of $h$ at the primes.
   Equation \eqref{heckedef2} shows $h$ is multiplicative. Moreover, taking $n=p^k$ in \eqref{heckedef2} yields
   \begin{equation}
   \label{heckedef4}
   h(p^{k+1}) = h(p)h(p^k)- \left(\tf{d}{p}\right)h(p^{k-1}).\\
   \end{equation}

  Given $s, s^{\prime}\in S$ we say that $s+ s^{\prime}$ is a completion of $s$ if $s+s^{\prime}$ is a an eigenform for all Hecke operators. We say that $s$ and $s^{\prime}$ are congruentially disjoint if there exists a modulus $M$, such that for any $n,m$ with $[q^n]s \neq 0$, $ [q^m]s^{\prime} \neq 0$, we have $n \not\equiv m \imod{M}$.\\
	
	In later sections, we find the Fourier coefficients of certain \textit{eta}-quotients $H$, by finding a completion $s^{\prime}+H$ with the additional property that $s^{\prime}$ and $H$ are congruentially disjoint. The ability to extract the coefficients of $H$ by employing congruences, contributes to the simplicity of our results.	\\
	
	To find a completion, we use certain multiplicative combinations discussed in \cite{sunwil}. Let $A^{n}(q)$ be the associated theta series of the binary quadratic form $A^{n}$ ($A$ composed with itself $n$ times). We employ a result of Sun and Williams \cite{sunwil} to find the following multiplicative linear combinations:

	\begin{table}[htb]  
	\caption {} 	\label{sunwilmc} 
     \begin{center} 
       \begin{tabular}{|l|c|}              \hline 
			$C_5 \cong \langle A \rangle $ & $\tf{1}{2}\left(I(q)-\mu A(q)-\lambda A^2(q) \right)$ \\\cline{2-2}
				 & $\tf{1}{2}\left(I(q)-\lambda A(q)-\mu A^2(q) \right)$ \\[.1cm]  \hline
				$C_7 \cong \langle A \rangle $ & $\tf{1}{2}\left(I(q)+\df{}{}\alpha A(q)+\beta A^2(q) +\gamma A^3(q) \right)$ \\\cline{2-2}
				 & $\tf{1}{2}\left(I(q)+\beta A(q)+\df{}{}\gamma A^2(q) +\alpha A^3(q) \right)$  \\\cline{2-2}
				 & $\tf{1}{2}\left(I(q)+\gamma \df{}{}A(q)+\alpha A^2(q) +\beta A^3(q) \right)$ \\[.05cm]  \hline
         $C_6 \cong \langle A \rangle $ & $\tf{1}{2}\left(I(q)-\df{}{} A^2(q) +A(q) -A^3(q)\right)$ \\[.1cm]  \hline
				$C_8 \cong \langle A \rangle $ & $\tf{1}{2}\left(I(q)- A^4(q) + \df{}{}\sqrt{2}A(q) -\sqrt{2}A^3(q)\right)$ \\[.1cm]  \hline
				$C_4 \times C_4 \cong \langle A,B \rangle $ & $\tf{1}{2} \left(I(q)+\df{}{} A^2(q) -B^2(q) -A^2B^2(q) +2A(q) -2AB^2(q)\right)$ \\  \hline
       \end{tabular}
			 \begin{flushright}
			 ,
			 \end{flushright}
			 \end{center}
   \end{table}
	\noindent
	where $I(q):=A^{0}(q)$ and we have set $\alpha:=2\cos(\tf{2\pi}{7})$, $\beta:=2\cos(\tf{4\pi}{7})$,$\gamma:=2\cos(\tf{6\pi}{7})$, $\mu:=\tf{1-\sqrt{5}}{2}$, $\lambda:=\tf{1+\sqrt{5}}{2}$. The notation $\langle A_1, A_2, \ldots, A_n \rangle $ is used to denote the group generated by $A_1, \ldots, A_n$ with $|A_1| \geq |A_2| \geq \ldots \geq |A_n|$, where $|A|$ denotes the order of $A$. Here and throughout, we use $C_n$ to denote the cyclic group of order $n$.
	Table \ref{sunwilmc} lists multiplicative combinations that apply directly to the examples appearing in later sections. We remind the reader that the property of being an eigenform for all Hecke operators is a stronger condition than multiplicativity.\\

In \cite{shoe}, Schoeneberg proved the identity
\begin{equation}
   \label{serredif}
   \df{B\left(6,1,\tf{k+1}{24},q\right) - B\left(6,5,\tf{k+25}{24},q\right)}{2} =q^{\tf{k+1}{24}}E(q)E(q^{k}),
   \end{equation}
   for $k \equiv -1\pmod{24}$.\\
	In section \ref{serre} we state and prove an extra parameter generalization of \eqref{serredif}. This generalization is Theorem \ref{6mthm}. Theorem \ref{6mthm} provides a representation of the most general \textit{eta}-products of weight 1, by the difference of two theta series arising from binary quadratic forms. We then state and prove Theorem \ref{psipsithm}, Theorem \ref{4mthm}, and Theorem \ref{9mthm} which provide representations for two families of weight 1 \textit{eta}-quotients as a difference of theta series. We would like to point out that many special cases of Theorem \ref{6mthm} were previously discussed in the literature. (See \cite{berk2}, \cite{blij}, \cite{fricke}, \cite{okamoto}, \cite{shoe}, \cite{serre}, \cite{sun}, \cite{sunwil2}.)\\
	
	The proofs of Theorems \ref{6mthm}, \ref{psipsithm}, \ref{4mthm}, and \ref{9mthm} are all elementary, in the sense that they make essential use of dissections of $q$-series. Lastly, we remark that the \textit{eta}-products arising from Theorem \ref{6mthm} will always yield cusp forms, while the \textit{eta}-quotients arising from Theorem \ref{psipsithm}, Theorem \ref{4mthm}, and \ref{9mthm} are not generally cusp forms.\\
   
   In sections \ref{examples} and \ref{examples71} we employ Theorem \ref{6mthm} and give explicit formulas for the Fourier coefficients of \textit{eta}-products of level 47 and 71.
	In section \ref{-135} we consider \textit{eta}-quotients of level $135, 648, 1024,$ and $1872,$ and employ Theorems \ref{6mthm}, \ref{psipsithm}, \ref{4mthm}, and \ref{9mthm} to deduce formulas for their Fourier coefficients. Also in section \ref{-135}, we contrast our multiplicative completion of $q^7E(q^{12})E(q^{156})$ with that of Gordon and Hughes (see \cite{gord}). In section \ref{outlook} we give concluding remarks.

   \section{elementary proofs of Theorems \ref{6mthm}, \ref{psipsithm}, \ref{4mthm}, and \ref{9mthm}}
   \label{serre}

\begin{theorem}
\label{6mthm}
If $m,s$ are positive integers with $24s-m>0$, then
\[
\df{B(6m,m,s,q) - B(6m,5m,s+m,q)}{2}=q^{s}E(q^m)E(q^{24s -m}).
\]
\end{theorem}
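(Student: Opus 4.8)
The plan is to express each theta series $B(6m,m,s,q)$ and $B(6m,5m,s+m,q)$ as a genuine double sum over $x,y$, then separate the two defining forms by examining the parity of the variable $x$. I would begin by writing
\[
B(6m,m,s,q)=\sum_{x,y}q^{6mx^2+mxy+sy^2},\qquad B(6m,5m,s+m,q)=\sum_{x,y}q^{6mx^2+5mxy+(s+m)y^2}.
\]
The key observation is that $24s-m>0$ with $s,m$ positive forces these to be positive definite (discriminant $m^2-24ms=m(m-24s)<0$), so the sums converge for $|q|<1$. My first move would be to complete the square in the $x$-variable of each form, or equivalently to look for a linear change of variables that recognizes the product structure on the right-hand side.

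The heart of the matter is that the right side factors as $q^sE(q^m)E(q^{24s-m})$, and by the Jacobi triple product and the splitting \eqref{pentcor} applied at arguments $q^m$ and $q^{24s-m}$, each factor $E$ is itself a difference of two theta functions $f$. Concretely, using \eqref{pent} I would write $E(q^m)=\sum_{j}(-1)^jq^{m\,j(3j-1)/2}$ and likewise for $E(q^{24s-m})$, so that
\[
q^sE(q^m)E(q^{24s-m})=q^s\sum_{j,k}(-1)^{j+k}q^{m\,j(3j-1)/2}\,q^{(24s-m)\,k(3k-1)/2}.
\]
The plan is then to collect the exponent $s+\tfrac{m}{2}j(3j-1)+\tfrac{24s-m}{2}k(3k-1)$ into one of the two quadratic forms $6mx^2+mxy+sy^2$ or $6mx^2+5mxy+(s+m)y^2$ according to a congruence in $j+k$ (or $j-k$), with the sign $(-1)^{j+k}$ matching the $\tfrac12(B_1-B_2)$ structure on the left. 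This is precisely the arithmetic generalization of Schoeneberg's \eqref{serredif}, which is the special case $m=1$, $24s-1=k$.

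The \textbf{main obstacle} will be verifying the change of variables: I must produce an explicit unimodular-type substitution (possibly after scaling) sending the pair of pentagonal-number exponents $(j,k)$ bijectively onto the lattice points $(x,y)$ of each form, with the $(-1)^{j+k}$ sign correctly sorting contributions between $B(6m,m,s,\cdot)$ (the $+$ terms) and $B(6m,5m,s+m,\cdot)$ (the $-$ terms). I expect the correct substitution to be of the form $x=\tfrac{j-k}{?}$, $y=j+k$ or similar, and the delicate point is checking that the quadratic form $6mx^2+mxy+sy^2$ reproduces exactly $\tfrac{m}{2}j(3j-1)+\tfrac{24s-m}{2}k(3k-1)+s$ on the even-sign sublattice while the second form captures the odd-sign part. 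Once the substitution is pinned down, the identity reduces to matching quadratic polynomials in two integer variables together with a parity bookkeeping for the signs, which is routine; I would verify it by direct expansion and by confirming that both sides agree as formal power series in $q$.
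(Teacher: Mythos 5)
There is a genuine gap in your proposal: the bijection at its heart cannot exist. You propose to send the pentagonal pairs $(j,k)$ ``bijectively onto the lattice points $(x,y)$ of each form,'' with the sign $(-1)^{j+k}$ deciding which of the two forms receives a given term. But each theta series $B(6m,m,s,q)$ and $B(6m,5m,s+m,q)$ individually contains vastly more terms than the corresponding signed half of $q^sE(q^m)E(q^{24s-m})$. For instance, both forms represent $0$ (at $(x,y)=(0,0)$), so both series have constant term $1$, while the right-hand side begins at $q^s$; similarly both series contain the terms $q^{6m}$, $q^{24sx^2}$, and so on. Such terms disappear only because they occur in \emph{both} series and cancel in the difference $B_1-B_2$; no assignment of pentagonal pairs to lattice points can be onto. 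The identity is therefore not a term-by-term matching of the two sides: its main content is precisely this massive cancellation between the two theta series, which your plan never addresses and which is not the ``routine'' quadratic-polynomial matching and parity bookkeeping you describe.

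What is missing is a dissection step that isolates where the cancellation happens. The paper splits each $B$ according to the residue of $y$ modulo $12$; after the substitution $x\to -x+y$, $y\to -y$, each residue class factors into a product of two one-variable theta functions, giving the expansions \eqref{7term1} and \eqref{7term2}. Using the shift identity \eqref{fshift}, the pieces with $y\equiv 0,\pm2,\pm3,\pm4,6 \pmod{12}$ of the two forms are seen to be \emph{identical} and cancel; only the pieces with $y\equiv\pm1,\pm5\pmod{12}$ survive, producing the four products in \eqref{ident1}, which then factor and are summed by Euler's pentagonal number theorem in the form \eqref{pentcor}. Your pentagonal pairs do biject with these surviving pieces --- the parities of $(j,k)$ correspond exactly to the residue classes $\pm1$ (even, even and odd, odd, going to $B_1$) versus $\pm5$-type mixed classes (going to $B_2$) --- so your sign-sorting intuition correctly identifies where the right-hand side sits inside the two theta series. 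But without an argument that all the remaining lattice points of the two forms cancel against each other, the proof does not close, and supplying that argument is essentially the whole of the paper's proof.
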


Without loss of generality we assume that $m$ and $s$ are coprime, and hence $(6m,m,s)$ and\\ $(6m,5m,s+m)$ are primitive forms, but not necessarily reduced. The transformations $(x,y) \rightarrow \left(x + \tf{y}{2} ,-y\right)$ and $(x,y) \rightarrow \left(x + \tf{y}{3} , y\right)$, send $(6m,m,s)$ to $(6m,5m,s+m)$. Using these transformations we see $(6m,m,s)$ and $(6m,5m,s+m)$ are equivalent over the $p$-adic integers for all primes $p$. Equivalence over the $p$-adic integers implies $(6m,m,s)$ and $(6m,5m,s+m)$ share the same genus \cite{cox}, \cite{voight}. Hence $q^{s}E(q^m)E(q^{24s -m})$ is a cusp form with simple congruential properties \cite[p.577]{siegel}. \\

We mention that Theorem \ref{6mthm} is explicitly used in section \ref{examples}, see \ref{diff1}; section \ref{examples71}, see \ref{diff171}; section \ref{-135}, see 
\ref{135sub} and \ref{-1872}. We now proceed with the proof of Theorem \ref{6mthm}.

\begin{proof}
For a fixed $j$ we have
\begin{equation}
\label{1}
\sum_{\substack{x\\ y\equiv j \imod{12}}}q^{6mx^{2} +mxy +sy^{2}} = \sum_{\substack{x\\ y\equiv -j \imod{12}}}q^{6mx^{2} +mxy +sy^{2}} = \sum_{x, y}q^{6mx^{2} +mx(12y+j) +s(12y+j)^{2}}.
\end{equation}
Letting $x \rightarrow -x+y$ and $y \rightarrow -y$ on the right-hand side of \eqref{1} yields,
\begin{equation}
\begin{aligned}
\label{12dis}
\sum_{x,y}q^{j^{2}s -jmx +6mx^{2} -j(24s-m)y+6(24s-m)y^{2} }&= q^{sj^{2}}\left(\sum_{x}q^{m(6x^{2}-jx)}\right)\left(\sum_{y}Q^{(6y^{2}-jy)}\right)\\
&=q^{sj^{2}}f\left(q^{m(6-j)},q^{m(6+j)}\right)f\left(Q^{6-j},Q^{6+j}\right),
\end{aligned}
\end{equation}
with $Q:=q^{24s-m}$.\\
Employing \eqref{1} and \eqref{12dis} we obtain
\begin{equation}
\label{7term1}
\begin{aligned}
B(6m,m,s,q) =& \sum_{j=0}^{11}q^{sj^{2}}f\left(q^{m(6-j)},q^{m(6+j)}\right)f\left(Q^{6-j},Q^{6+j}\right)\\
=&f\left(q^{6m},q^{6m}\right)f\left(Q^{6},Q^{6}\right)\\
&+2\sum_{j=1}^{5}q^{sj^{2}}f\left(q^{m(6-j)},q^{m(6+j)}\right)f\left(Q^{6-j},Q^{6+j}\right)\\
&+q^{36s}f\left(1,q^{12m}\right)f\left(1,Q^{12}\right).
\end{aligned}
\end{equation}

We repeat the above process with the form $(6m,5m,s+m)$ to get the analogous seven term expansion for $B(6m,5m,s+m,q)$:
\begin{equation}
\label{7term2}
\begin{aligned}
B(6m,5m,s+m,q) =& \sum_{j=0}^{11}q^{(s+m)j^{2}}f\left(q^{m(6-5j)},q^{m(6+5j)}\right)f\left(Q^{6-j},Q^{6+j}\right)\\
=&f\left(q^{6m},q^{6m}\right)f\left(Q^{6},Q^{6}\right)\\
&+2\sum_{j=1}^{5}q^{(s+m)j^{2}}f\left(q^{m(6-5j)},q^{m(6+5j)}\right)f\left(Q^{6-j},Q^{6+j}\right)\\
&+q^{36(s+m)}f\left(q^{-24m},q^{36m}\right)f\left(1,Q^{12}\right).
\end{aligned}
\end{equation}
Appropriately applying \eqref{fshift} to the right-hand side of \eqref{7term2}, we write \eqref{7term2} as
\begin{equation}
\label{7term3}
\begin{aligned}
B(6m,5m,s+m,q) =& f\left(q^{6m},q^{6m}\right)f\left(Q^{6},Q^{6}\right)\\
 &+ 2q^{s+m}f\left(q^{m},q^{11m}\right)f\left(Q^{5},Q^{7}\right)\\
&+2\sum_{j=2}^{4}q^{sj^{2}}f\left(q^{m(6-j)},q^{m(6+j)}\right)f\left(Q^{6-j},Q^{6+j}\right)\\
&+2q^{25s-m}f\left(q^{5m},q^{7m}\right)f\left(Q,Q^{11}\right)\\
&+q^{36s}f\left(1,q^{12m}\right)f\left(1,Q^{12}\right).
\end{aligned}
\end{equation}
It is clear that many of the terms in \eqref{7term3} also appear in \eqref{7term1}. Subtracting \eqref{7term3} from \eqref{7term1} we perform numerous cancellations to obtain
\begin{equation}\label{ident1}
\begin{split}
\df{B(6m,m,s,q) - B(6m,5m,s+m,q)}{2}=&~q^{s}f\left(q^{5m},q^{7m}\right)f\left(Q^{5},Q^{7}\right)+q^{25s}f\left(q^{m},q^{11m}\right)f\left(Q,Q^{11}\right)\\
-&q^{s+m}f\left(q^{m},q^{11m}\right)f\left(Q^{5},Q^{7}\right)-q^{s}Qf\left(q^{5m},q^{7m}\right)f\left(Q,Q^{11}\right).
\end{split}
\end{equation}
The four terms on the right-hand side of \eqref{ident1} can be written as the product
\begin{equation}
\label{factor2}
q^{s}\cdot\left(f\left(q^{5m},q^{7m}\right)-q^{m}f\left(q^{m},q^{11m}\right)\right)\cdot\left(f\left(Q^{5},Q^{7}\right)-Qf\left(Q,Q^{11}\right)\right).
\end{equation}
Employing \eqref{pentcor}, we conclude that
\begin{equation}
\df{B(6m,m,s,q) - B(6m,5m,s+m,q)}{2}=q^{s}E(q^m)E(q^{24s -m}).
\end{equation}
\end{proof}

We remark that a result of Sun given in \cite{sun} is closely related to our Theorem \ref{6mthm}. Sun's result makes essential use of a lemma, found in \cite[p.~356]{sunwil2}, which employs representations by four binary quadratic forms. It can be shown that the result of \cite[(p.~16, Thm.~3.1)]{sun} follows as a corollary to our Theorem \ref{6mthm}.

\begin{theorem}
\label{psipsithm}
If $m,s$ are positive integers with $8s-m>0$, then
\begin{align*}
\label{psipsi}
\df{B(8m,2m,s,q) - B(8m,6m,s+m,q)}{2}&=q^s\df{E(q^m)E(q^{4m})E(q^{8s-m})E(q^{4(8s-m)})}{E(q^{2m})E(q^{2(8s-m)})}\\
&=q^{s}\psi(-q^{8s-m})\psi(-q^{m}).
\end{align*}
\end{theorem}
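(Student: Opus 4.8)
The plan is to mirror the proof of Theorem~\ref{6mthm}, replacing the $12$-dissection used there by an $8$-dissection and replacing the role of the pentagonal relation \eqref{pentcor} by the relation \eqref{psipsiaux}. The second equality in the statement is immediate: by the relation $\psi(-q)=\df{E(q)E(q^4)}{E(q^2)}$ recorded in Section~\ref{intro} one has $\psi(-q^m)=\df{E(q^m)E(q^{4m})}{E(q^{2m})}$ and $\psi(-q^{8s-m})=\df{E(q^{8s-m})E(q^{4(8s-m)})}{E(q^{2(8s-m)})}$, whose product is exactly the displayed \textit{eta}-quotient. So the whole content is the first equality, which I will establish by dissecting each of $B(8m,2m,s,q)$ and $B(8m,6m,s+m,q)$ and subtracting.

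First I would set $Q:=q^{8s-m}$ and, exactly as in the dissection leading to \eqref{12dis}, fix a residue $j$, split $B(8m,2m,s,q)$ according to $y\equiv j \imod{8}$, and substitute $y=8y'+j$. The unimodular change of variables $x\to -x+y'$, $y'\to -y'$ then clears the cross term (the modulus $8$ is forced by the ratio of the leading coefficient $8m$ to the middle coefficient $2m$), turning the quadratic form into $m(8x^{2}-2jx)+(8s-m)(8y'^{2}-2jy')+sj^{2}$. Hence the $j$-th contribution equals $q^{sj^{2}}f(q^{2m(4-j)},q^{2m(4+j)})f(Q^{2(4-j)},Q^{2(4+j)})$, and pairing $j$ with $-j \imod{8}$ gives the five-term expansion
\begin{align*}
B(8m,2m,s,q)=\ &f(q^{8m},q^{8m})f(Q^{8},Q^{8})+2q^{s}f(q^{6m},q^{10m})f(Q^{6},Q^{10})\\
&+2q^{4s}f(q^{4m},q^{12m})f(Q^{4},Q^{12})+2q^{9s}f(q^{2m},q^{14m})f(Q^{2},Q^{14})\\
&+q^{16s}f(1,q^{16m})f(1,Q^{16}).
\end{align*}
The identical procedure applied to $(8m,6m,s+m)$, now with the cross term cleared by $x\to -x+3y'$, $y'\to -y'$, yields the analogous five terms with prefactor $q^{(s+m)j^{2}}$ and $x$-arguments $f(q^{2m(4-3j)},q^{2m(4+3j)})$.

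The next step is to normalize the companion expansion: the terms with $j=2,3,4$ carry a negative first argument, and I would apply \eqref{fshift} with $n=1$ to rewrite $f(q^{-4m},q^{20m})=q^{-4m}f(q^{4m},q^{12m})$, $f(q^{-10m},q^{26m})=q^{-10m}f(q^{6m},q^{10m})$, and $f(q^{-16m},q^{32m})=q^{-16m}f(1,q^{16m})$. Absorbing the released powers of $q$, the $j=0,2,4$ terms of the companion coincide with their counterparts in $B(8m,2m,s,q)$ and cancel on subtraction, leaving only the $j=1$ and $j=3$ contributions. Writing $P_1=f(q^{6m},q^{10m})$, $P_3=f(q^{2m},q^{14m})$, $R_1=f(Q^{6},Q^{10})$, $R_3=f(Q^{2},Q^{14})$, this gives
\[
\df{B(8m,2m,s,q)-B(8m,6m,s+m,q)}{2}=q^{s}P_1R_1+q^{9s}P_3R_3-q^{s+m}P_3R_1-q^{9s-m}P_1R_3.
\]
Since $q^{9s}=q^{s}q^{m}Q$ and $q^{9s-m}=q^{s}Q$, the right-hand side factors as $q^{s}(P_1-q^{m}P_3)(R_1-QR_3)$, and \eqref{psipsiaux} identifies $P_1-q^{m}P_3=\psi(-q^{m})$ and $R_1-QR_3=\psi(-q^{8s-m})$, which is the claim.

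I expect the main obstacle to be bookkeeping rather than conceptual: pinning down the correct dissection modulus and the two shear substitutions ($c=1$ for the first form, $c=3$ for the companion), and then carefully tracking the powers of $q$ released by \eqref{fshift} so that the three unwanted terms cancel and the four survivors assemble into $q^{s}(P_1-q^{m}P_3)(R_1-QR_3)$. The hypothesis $8s-m>0$ is precisely what guarantees $|Q|<1$, so that every theta series above is legitimate.
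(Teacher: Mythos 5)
Your proposal is correct and follows essentially the same route as the paper: the paper's (sketched) proof consists precisely of the two five-term expansions you derive, their subtraction, the factorization into $q^{s}\left(f(q^{6m},q^{10m})-q^{m}f(q^{2m},q^{14m})\right)\left(f(Q^{6},Q^{10})-Qf(Q^{2},Q^{14})\right)$, and an appeal to \eqref{psipsiaux}. Your fleshing-out of the dissection, the shear substitutions, and the \eqref{fshift} normalizations for $j=2,3,4$ supplies exactly the details the paper omits with ``one can show,'' and all of your computations check out.
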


We give an application of Theorem \ref{psipsithm} in section \ref{-135}. Specifically, the example involving level $1024$ (see \ref{1024}) makes explicit use of Theorem \ref{psipsithm}.	The proof of Theorem \ref{psipsithm} is similar to the proof of Theorem \ref{6mthm}, and so we sketch the proof.

\begin{proof}

One can show that we have the 5 term expansions
\begin{equation}
\label{7term18}
\begin{aligned}
B(8m,2m,s,q) =& \sum_{j=0}^{7}q^{sj^{2}}f\left(q^{m(8-2j)},q^{m(8+2j)}\right)f\left(Q^{8-2j},Q^{8+2j}\right)\\
=&f\left(q^{8m},q^{8m}\right)f\left(Q^{8},Q^{8}\right)\\
&+2q^{s}f\left(q^{6m},q^{10m}\right)f\left(Q^{6},Q^{10}\right)\\
&+2q^{4s}f\left(q^{4m},q^{12m}\right)f\left(Q^4,Q^{12}\right)\\
&+2q^{9s}f\left(q^{2m},q^{14m}\right)f\left(Q^{2},Q^{14}\right)\\
&+q^{16s}f\left(1,q^{16m}\right)f\left(1,Q^{16}\right),
\end{aligned}
\end{equation}

and
\begin{equation}
\label{7term38}
\begin{aligned}
B(8m,6m,s+m,q) =& \sum_{j=0}^{7}q^{(s+m)j^{2}}f\left(q^{m(8-6j)},q^{m(8+6j)}\right)f\left(Q^{8-2j},Q^{8+2j}\right)\\
=&f\left(q^{8m},q^{8m}\right)f\left(Q^{8},Q^{8}\right)\\
&+2q^{(s+m)}f\left(q^{2m},q^{14m}\right)f\left(Q^{6},Q^{10}\right)\\
&+2q^{4s}f\left(q^{4m},q^{12m}\right)f\left(Q^{4},Q^{12}\right)\\
&+2q^{9s-m}f\left(q^{6m},q^{10m}\right)f\left(Q^{2},Q^{14}\right)\\
&+q^{16s}f\left(1,q^{16m}\right)f\left(1,Q^{16}\right),
\end{aligned}
\end{equation}
where $Q:=q^{8s-m}$. Subtracting \eqref{7term38} from \eqref{7term18} we obtain
\begin{equation}\label{ident81}
\setlength{\jot}{10pt}
\begin{split}
&B(8m,2m,s,q) - B(8m,6m,s+m,q)\\
&=2q^{s}\cdot\left(f\left(q^{6m},q^{10m}\right)-q^{m}f\left(q^{2m},q^{14m}\right)\right)\cdot\left(f\left(Q^{6},Q^{10}\right)-Qf\left(Q^2,Q^{14}\right)\right).
\end{split}
\end{equation}

Employing \eqref{psipsiaux}, we conclude that
\begin{equation}
\df{B(8m,2m,s,q) - B(8m,6m,s+m,q)}{2}=q^{s}\psi(-q^{8s-m})\psi(-q^{m}).
\end{equation}
\end{proof}

\begin{theorem}
\label{4mthm}
If $m,k$ are positive integers, then
\begin{align*}
\df{B\left(m,0,4k,q\right) - B\left(4m,4m,k+m,q\right)}{2}&=q^m\df{E^2(q^s)E^2(q^{16m})}{E(q^{2s})E(q^{8m})}\\
&=q^m\psi(q^{8m})\phi\left(-q^{k}\right).
\end{align*}
\end{theorem}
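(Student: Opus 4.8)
The plan is to avoid the seven-term $j$-dissection used in the proofs above, and instead to diagonalize both forms directly, reducing each theta series to a sum of $q^{mu^2+kv^2}$ over a parity-constrained sublattice. The single new observation is the correct change of variables for the second form; everything else is parity bookkeeping, so the argument stays in the same elementary spirit as the dissections above.

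First I would note that $(m,0,4k)$ is already diagonal, so that
\[
B(m,0,4k,q)=\Big(\sum_x q^{mx^2}\Big)\Big(\sum_y q^{4ky^2}\Big)=\phi(q^m)\,\phi(q^{4k}),
\]
and, setting $w=2y$, this is exactly $\sum_{x\in\mathbb Z,\ w\ \text{even}}q^{mx^2+kw^2}$. For $(4m,4m,k+m)$ I would complete the square in $x$ to obtain $4mx^2+4mxy+(k+m)y^2=m(2x+y)^2+ky^2$, and then apply the substitution $u=2x+y,\ v=y$. This is a bijection from $\mathbb Z^2$ onto the index-two sublattice $L=\{(u,v):u\equiv v\imod 2\}$ — the inverse $x=(u-v)/2,\ y=v$ is integral precisely because $u\equiv v$ — and hence
\[
B(4m,4m,k+m,q)=\sum_{\substack{u,v\\ u\equiv v\,(2)}}q^{mu^2+kv^2}=\sum_{u,v\ \text{even}}q^{mu^2+kv^2}+\sum_{u,v\ \text{odd}}q^{mu^2+kv^2}.
\]

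Now I would subtract. The block in which the $m$-variable and the $k$-variable are both even occurs identically in $B(m,0,4k,q)$ (as the $x$-even, $w$-even part) and in $B(4m,4m,k+m,q)$, so it cancels, leaving
\[
B(m,0,4k,q)-B(4m,4m,k+m,q)=\Big(\sum_{t\ \text{odd}}q^{mt^2}\Big)\Big(\sum_{w\ \text{even}}q^{kw^2}-\sum_{v\ \text{odd}}q^{kv^2}\Big),
\]
after factoring the common odd $m$-variable sum out of the two surviving blocks. The first factor equals $\phi(q^m)-\phi(q^{4m})$, while the second is $\sum_n(-1)^n q^{kn^2}=\phi(-q^k)$. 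Dividing by $2$ and invoking \eqref{phieven} in the form $\phi(q^m)-\phi(q^{4m})=2q^m\psi(q^{8m})$ produces $q^m\psi(q^{8m})\phi(-q^k)$; the eta-quotient form then follows from \eqref{psi} together with $\phi(-q^k)=E^2(q^k)/E(q^{2k})$ (so the exponent ``$s$'' in the statement should read $k$). The only point that requires care is verifying that the change of variables is a bijection onto $L$ and that the two all-even blocks match up exactly for cancellation; I expect this matching, rather than any delicate $q$-series manipulation, to be the crux of the proof.
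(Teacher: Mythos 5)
Your proposal is correct and follows essentially the same route as the paper's own proof: both complete the square to rewrite $B(4m,4m,k+m,q)$ as $\sum_{x\equiv y\pmod{2}}q^{mx^2+ky^2}$, cancel the doubly-even block against the even-$y$ part of $B(m,0,4k,q)$, and factor what remains as an odd-index theta sum times the alternating sum $\phi(-q^k)$ (your use of \eqref{phieven} for the first factor is just a restatement of the paper's direct evaluation $\sum_x q^{m(2x+1)^2}=2q^m\psi(q^{8m})$). You are also right that the exponent ``$s$'' in the statement is a typo for $k$, as the paper's proof and the second displayed equality confirm.
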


We give an application of Theorem \ref{4mthm} in section \ref{-135}. The example involving level $1024$ (see \ref{1024}) makes explicit use of both Theorem \ref{psipsithm} and Theorem \ref{4mthm}.\\

   \begin{proof}
   We have
   \begin{align*}
   B(m,0,4k,q) - B(4m,4m,k+m,q)   &=   \sum_{\substack{x\\ y \equiv 0\imod{2} }}q^{mx^2 +ky^2} - \sum_{x,y}q^{m(2x+y)^2 +ky^2}\\
   &=   \sum_{\substack{x\\ y \equiv 0\imod{2} }}q^{mx^2 +ky^2} - \sum_{x \equiv y\imod{2}}q^{mx^2 +ky^2}\\
   &=   \sum_{\substack{x \equiv 1 \imod{2}\\ y \equiv 0 \imod{2} }}q^{mx^2 +ky^2} - \sum_{x \equiv y \equiv 1 \imod{2}}q^{mx^2 +ky^2}\\
   &=   \sum_{x \equiv 1 \imod{2}}q^{mx^2} \left( \sum_{ y \equiv 0 \imod{2}}q^{ky^2}-\sum_{ y \equiv 1 \imod{2}}q^{ky^2}\right)\\
   &=   \left(\sum_{x}q^{m(2x+1)^2}\right) \left( \sum_{ y}(-1)^y q^{ky^2}\right)\\
   &= 2q^m\psi(q^{8m})\phi(-q^k).
   \end{align*}

   \end{proof}

	\begin{theorem}
	\label{9mthm}
If $m,s$ are positive integers, then
\begin{align*}
\df{B(m,0,9s,q) - B(9m,6m,s+m,q)}{2}&=q^m\df{E(q^s)E(q^{4s})E^2(q^{6s})}{E(q^{2s})E(q^{3s})E(q^{12s})}\cdot \df{E^2(q^{6m})E(q^{9m})E(q^{36m})}{E(q^{3m})E(q^{12m})E(q^{18m})}.
%&=q^{m}f(q^{3m},q^{15m})f(q^{2s},q^{4s})\df{E(q^s)}{E(q^{3s})}.
\end{align*}
\end{theorem}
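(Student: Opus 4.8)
The plan is to mimic the elementary, separation-of-variables approach used for Theorem \ref{4mthm}, replacing the parity dissection there by a dissection modulo $3$. First I would exhibit both theta series as sums of $q^{mu^2 + sy^2}$ over congruence classes. Since $mx^2 + 9sy^2$ has no cross term, a change of summation variable gives $B(m,0,9s,q) = \phi(q^m)\,\phi(q^{9s})$, while completing the square shows $9mx^2 + 6mxy + (s+m)y^2 = m(3x+y)^2 + sy^2$, so that $B(9m,6m,s+m,q) = \sum_{x,y} q^{m(3x+y)^2 + sy^2}$. Setting $u = 3x+y$, which runs over all integers congruent to $y$ modulo $3$ as $x$ varies, turns this into $\sum_{u \equiv y \imod{3}} q^{mu^2 + sy^2}$.

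Next I would introduce the restricted sums $\Phi_a(q^m) = \sum_{u \equiv a \imod{3}} q^{mu^2}$ and $\Psi_a(q^s) = \sum_{y \equiv a \imod{3}} q^{sy^2}$. The map $u \mapsto -u$ yields $\Phi_1 = \Phi_2$ and $\Psi_1 = \Psi_2$, while $\Phi_0(q^m) = \phi(q^{9m})$ and $\Psi_0(q^s) = \phi(q^{9s})$. Writing $B(m,0,9s,q) = (\Phi_0 + \Phi_1 + \Phi_2)\Psi_0$ and $B(9m,6m,s+m,q) = \Phi_0\Psi_0 + \Phi_1\Psi_1 + \Phi_2\Psi_2$, the difference collapses to $2\Phi_1(q^m)\big(\Psi_0(q^s) - \Psi_1(q^s)\big)$ after invoking the two symmetries. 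This isolates the two factors of the claimed product: the second eta-quotient should be $\Phi_1(q^m)/q^m$, and the first should be $\Psi_0(q^s) - \Psi_1(q^s)$.

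For the $\Phi_1$ factor I would use \eqref{mod31}, which gives $2\Phi_1(q^m) = \phi(q^m) - \phi(q^{9m}) = 2q^m f(q^{3m}, q^{15m})$, and then \eqref{f15} with $q \mapsto q^{3m}$ to recognize $q^m f(q^{3m}, q^{15m})$ as exactly $q^m \tfrac{E^2(q^{6m})E(q^{9m})E(q^{36m})}{E(q^{3m})E(q^{12m})E(q^{18m})}$. The key trick, and the step I expect to be the main obstacle, is evaluating $\Psi_0 - \Psi_1$. I would observe that, since $\Psi_1 = \Psi_2$ and $1 + \omega + \omega^2 = 0$ for $\omega = e^{2\pi i/3}$,
\begin{equation*}
\Psi_0(q^s) - \Psi_1(q^s) = \sum_{y} \omega^{y}\, q^{sy^2} = f\big(q^s\omega,\, q^s\omega^2\big),
\end{equation*}
matching the exponent $sy^2$ and the weight $\omega^y$ against the definition \eqref{fdef}. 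Applying the Jacobi triple product \eqref{jtp} with $ab = q^{2s}\omega^3 = q^{2s}$, the two resulting infinite products pair up via the elementary factorization $(1 + x\omega)(1 + x\omega^2) = 1 - x + x^2 = \tfrac{1+x^3}{1+x}$ applied to $x = q^{s(2n+1)}$.

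The remaining work is to recognize the product $E(q^{2s})\prod_{n\ge 0}\tfrac{1 + q^{3s(2n+1)}}{1 + q^{s(2n+1)}}$ as an eta-quotient. Using the standard evaluation $(-q;q^2)_\infty = \tfrac{E^2(q^2)}{E(q)E(q^4)}$ (and the same with $q \mapsto q^3$) collapses this to $\tfrac{E(q^s)E(q^{4s})E^2(q^{6s})}{E(q^{2s})E(q^{3s})E(q^{12s})}$, the first factor in the statement, and multiplying the two factors then yields Theorem \ref{9mthm}. I anticipate the only delicate points are the bookkeeping of congruence classes in the mod-$3$ dissection and the passage through complex cube roots of unity in the triple-product step; once $\Psi_0 - \Psi_1$ is written as $f(q^s\omega, q^s\omega^2)$, the rest is a routine product simplification parallel to the derivation of \eqref{f15}.
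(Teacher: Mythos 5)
Your proof is correct, and its skeleton coincides with the paper's: your mod-3 dissection giving
\begin{equation*}
B(m,0,9s,q)-B(9m,6m,s+m,q) \;=\; 2\,\Phi_1(q^m)\bigl(\Psi_0(q^s)-\Psi_1(q^s)\bigr)
\end{equation*}
is exactly the paper's identity \eqref{9a}, there written as $2q^mf(q^{3m},q^{15m})\bigl(\phi(q^{9s})-q^sf(q^{3s},q^{15s})\bigr)$, and your handling of the $m$-factor via \eqref{mod31} and \eqref{f15} is also the paper's. Where you genuinely diverge is in evaluating $\Psi_0-\Psi_1$. The paper gets \eqref{auxxpsi} by letting $q\to -q^s$ in \eqref{mod32} to obtain \eqref{psiauxxx} and multiplying through by $\tfrac{E^2(q^{6s})}{E(q^{3s})E(q^{12s})}$, which leans on the eta-product evaluations of $\psi(-q)$, $f(-q^{3s},q^{6s})$, and $\psi(-q^{9s})$. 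You instead insert the weight $\omega^y$, recognize $\sum_y\omega^y q^{sy^2}=f(q^s\omega,q^s\omega^2)$, and apply \eqref{jtp} with complex arguments, pairing conjugate factors via $(1+x\omega)(1+x\omega^2)=\tfrac{1+x^3}{1+x}$ and finishing with $(-q;q^2)_\infty=\tfrac{E^2(q^2)}{E(q)E(q^4)}$. I checked the one step you glossed over: matching $f(q^s\omega,q^s\omega^2)$ against \eqref{fdef} requires $\omega^{n(3n-1)/2}=\omega^{n}$, which holds because $n(3n-1)/2-n=3n(n-1)/2$ is a multiple of $3$ (as $n(n-1)$ is even); with that noted, your product collapse to $\tfrac{E(q^s)E(q^{4s})E^2(q^{6s})}{E(q^{2s})E(q^{3s})E(q^{12s})}$ is valid. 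As for what each route buys: yours is self-contained modulo the triple product and the standard $(-q;q^2)_\infty$ evaluation, and the roots-of-unity extraction would work uniformly for a dissection to any modulus, whereas the paper's argument stays within real $q$-series manipulations and recycles auxiliary identities it has already stated in the introduction.
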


\begin{proof}
We employ \eqref{mod31} to find
\begin{equation}
\label{9m1}
B(m,0,9s,q)=\phi(q^{9m})\phi(q^{9s})+ 2q^m f(q^{3m},q^{15m})\phi(q^{9s}).
\end{equation}
We also have
\begin{align*}
\label{9m2}
B(9m,6m,s+m,q)&= \sum_{x, 3\mid y}q^{m(3x+y)^2+sy^2}+\sum_{x, 3\nmid y}q^{m(3x+y)^2+sy^2}\\
&=\sum_{x,y}q^{9m(x+y)^2+9sy^2}+2\sum_{x, y}q^{m(3x+3y+1)^2+s(3y+1)^2}\\
&= \phi(q^{9m})\phi(q^{9s}) +2q^{s+m}\sum_{x, y}q^{6mx+9mx^2+6sy+9sy^2}\\
&= \phi(q^{9m})\phi(q^{9s}) +2q^{s+m}f(q^{3m},q^{15m})f(q^{3s},q^{15s}).
\end{align*}
Thus we come to
\begin{equation}
\label{9a}
\df{B(m,0,9s,q) - B(9m,6m,s+m,q)}{2}=q^mf(q^{3m},q^{15m})\left(\phi(q^{9s})-q^sf(q^{3s},q^{15s})\right).
\end{equation}
Letting $q \rightarrow -q^s$ in \eqref{mod32} yields
\begin{equation}
\label{psiauxxx}
\psi(-q^s)= f(-q^{3s},q^{6s}) -q^s\psi(-q^{9s}).
\end{equation}

Multiplying both sides of \eqref{psiauxxx} by $\tf{E^2(q^{6s})}{E(q^{3s})E(q^{12s})}$ gives
\begin{equation}
\label{auxxpsi}
\df{E(q^s)E(q^{4s})E^2(q^{6s})}{E(q^{2s})E(q^{3s})E(q^{12s})}=\phi(q^{9s})-q^sf(q^{3s},q^{15s}).
\end{equation}
%Combining \eqref{9a} and \eqref{auxxpsi} yields the theorem.
Letting $q \rightarrow q^{3m}$ in \eqref{f15} and employing \eqref{auxxpsi}, allows us to write the right-hand side of \eqref{9a} as an \textit{eta}-quotient. Hence the proof is complete.

\end{proof}

   We remark that unlike Theorem \ref{6mthm}, the forms of Theorem \ref{psipsithm}, Theorem \ref{4mthm}, and Theorem \ref{9mthm} are not necessarily in the same genus.

   \section{Weight 1 Eta-Product of Level $47$}
   \label{examples}

        In this section we determine the Fourier coefficients of $q^{2}E(q)E(q^{47})$. 
 We have CL$(-47) \cong C_5$ and the reduced forms are
\begin{center}
\begin{tabular}{ | l | l | l | }
  \hline     
  \multicolumn{2}{|c|}{CL$(-47) \cong C_5 $}& $\left(\tf{-47}{p}\right)$\\
  \hline                   
  Principal Genus & $(1,1,12), (2,\pm1,6), (3,\pm1,4)$ &+1 \\ \hline 
\end{tabular}
\begin{flushright}
			 .
			 \end{flushright}
   \end{center}
   In the above table, $p$ is taken to be coprime to $-47$ and represented by the given genus.
	
   Taking $m=1$ and $s=2$ in Theorem \ref{6mthm} yields
   \begin{equation}
   \label{diff1}
   \sum_{n>0} a(n)q^n:=\df{B(6,1,2,q) - B(6,5,3,q)}{2}=\df{B(2,1,6,q) - B(3,1,4,q)}{2}=q^{2}E(q)E(q^{47}).
   \end{equation}
   One can check $q^{2}E(q)E(q^{47})$ is not an eigenform for all Hecke operators; hence, we must find a completion for $q^{2}E(q)E(q^{47})$. Throughout this examples we will let $\mu:=\tf{1-\sqrt{5}}{2}$ and $\lambda:=\tf{1+\sqrt{5}}{2}$. Using the first and second row of Table \ref{sunwilmc}, with $A=(2,1,6)$, we find
      
   \begin{equation}
	\label{47a1}
   A_1(q) :=\df{B(1,1,12,q) -\mu B(2,1,6,q)-\lambda B(3,1,4,q)}{2},
   \end{equation}
   and
   \begin{equation}
	\label{47a2}
  A_2(q) :=\df{B(1,1,12,q) -\lambda B(2,1,6,q)-\mu B(3,1,4,q)}{2}
   \end{equation}
	are multiplicative. We show \eqref{47a1} and \eqref{47a2} are eigenforms for all Hecke operators, and we find their eigenvalues. We then derive a formula for the coefficients of $A_1(q)$ and $A_2(q)$, and exploit
   \begin{equation}
   \label{47diffrel}
   [q^n]q^{2}E(q)E(q^{47}) = \df{[q^n]A_{1}(q)-[q^n]A_2(q)}{\sqrt{5}},
   \end{equation}
   to find the Fourier coefficients of $q^{2}E(q)E(q^{47})$.\\

   To show \eqref{47a1} and \eqref{47a2} are eigenforms for $T_p$, we consider the action of $T_p$ on the forms of discriminant $-47$. We separate cases according to the value of $\left(\tf{-47}{p}\right)$. 
	\textbf{
\begin{flushleft}
Case 1: $\left(\df{-47}{p}\right)=1$.
\end{flushleft}
}
Table \ref{47tab1} gives the explicit action of $T_p$ on the theta series associated with forms of discriminant $-47$.

   \begin{table}[htb]  
	\caption{} \label{47tab1}
     \begin{center} 
       \begin{tabular}{|l|c|c|c|}              \hline 
         $B(a,b,c,q)$ & $(1,1,12,p)>0$ & $(3,1,4,p)>0$ & $(2,1,6,p)>0$ \\ \hline
         $B(1,1,12,q) \xrightarrow{T_p}$   & $2B(1,1,12,q)$    & $2B(3,1,4,q)$ & $2B(2,1,6,q)$            \\ \hline
         $B(3,1,4,q) \xrightarrow{T_p}$     & $2B(3,1,4,q)$    & $B(1,1,12,q) + B(2,1,6,q)$ & $B(3,1,4,q) + B(2,1,6,q)$            \\ \hline
         $B(2,1,6,q) \xrightarrow{T_p}$      & $2B(2,1,6,q)$   & $B(3,1,4,q) + B(2,1,6,q)$  & $B(1,1,12,q) + B(3,1,4,q)$    \\ \hline
       \end{tabular}
			\begin{flushright}
			 .
			 \end{flushright}
     \end{center}
   \end{table}
\noindent
	We comment that Table \ref{47tab1} is consistent with the formulas of Hecke \cite[p.794]{hecke}.
	
	\break
	
	\noindent
   With the aid of Table \ref{47tab1}, we compute the action of $T_p$ on $A_1(q)$ and $A_2(q)$: 
	
   \begin{table}[htb]  
	\caption{} \label{47tab2}
     \begin{center} 
       \begin{tabular}{|l|c|c|c|}              \hline 
         $ $ & $(1,1,12,p)>0$ & $(3,1,4,p)>0$ & $(2,1,6,p)>0$ \\ \hline
         $A_1(q) \xrightarrow{T_p}$   & $2A_1(q)$    & $-\lambda A_1(q)$ & $-\mu A_1(q)$            \\ \hline
         $A_2(q)\xrightarrow{T_p}$     & $2A_2(q)$    & $-\mu A_2(q)$ & $-\lambda A_2(q)$            \\ \hline
       \end{tabular}
       \label{tptab247}
			\begin{flushright}
			 .
			 \end{flushright}
     \end{center}
   \end{table}
	
	\noindent
   Hence, \eqref{47a1} and \eqref{47a2} are eigenforms for $T_p$, with eigenvalues 2, $-\mu$, $-\lambda$ when $\left(\tf{-47}{p}\right)=1$.\\
      \textbf{
\begin{flushleft}
Case 2: $\left(\df{-47}{p}\right)=-1$.
\end{flushleft}
}

\noindent
A prime $p$ with $\left(\tf{-47}{p}\right)=-1$ implies $(a,b,c,p)= 0$ for any form of discriminant $-47$. Thus, \eqref{47a1} and \eqref{47a2} are eigenforms for such $T_p$ with eigenvalue 0.
 \textbf{
\begin{flushleft}
Case 3: $\left(\df{-47}{p}\right)=0$.
\end{flushleft}
}
    \noindent
		We find that \eqref{47a1} and \eqref{47a2} are eigenforms for $T_{47}$ with eigenvalue 1.\\
   \noindent
   We have shown \eqref{47a1} and \eqref{47a2} are eigenforms for all Hecke operators, and have found their corresponding eigenvalues. \\

    We now state criteria to determine when $(a,b,c,p)>0$ for a form of discriminant $-47$. We are able to distinguish each case by examining the Weber Class polynomial for discriminant $-47$, $W_{-47}(z):= z^5 +2z^4 +2z^3 +z^2-1$. We only consider primes $p$ with $\left(\tf{-47}{p}\right)\neq-1$. As mentioned in section \ref{intro}, we follow \cite{cox} and \cite{voight} to find:

\begin{enumerate}

      \item $(1,1,12,p)>0$ iff $p=47$ or rem$(z^p,W_{-47}(z)) \equiv z \imod{p}$,
     
     \vspace{.3cm}
     
     \item  $(2,1,6,p)>0$ iff rem$(94z^p,W_{-47}(z)) \equiv $\\ $(-47+3r_p)z^4+(-5r_p-47)z^3+(-11r_p-47)z^2+(-5r_p-47)z-47-r_p \imod{p}$,

      \vspace{.3cm}
     
     \item  $(3,1,4,p)$ iff rem$(94z^p,W_{-47}(z)) \equiv$ \\ $(r_p+47)z^4+(47-r_p)z^3+(7r_p+47)z^2+12zr_p-47+5r_p \imod{p}$,
     
     \end{enumerate}
     
      \vspace{.4cm}
			\noindent
     where $r_p$ is defined by $(r_p)^2 \equiv -47 \imod{p}$.\\
     
     Define $S_1,S_2,S_3$ to be the set of primes $p\neq 47$ represented by $(1,1,12), (2,1,6),$ and $(3,1,4)$, respectively. Let $S_4$ be the set of primes $p$ with $\left(\tf{-47}{p}\right)=-1$.\\
    We factor a positive integer $n$ as
   \begin{equation}
   \label{nfactor47}
   n= 47^{\operatorname{ord}_{47}(n)}\prod_{p_1 \in S_1}p_1^{\operatorname{ord}_{p_1}(n)}\prod_{p_2 \in S_2}p_2^{\operatorname{ord}_{p_2}(n)}\prod_{p_3 \in S_3}p_3^{\operatorname{ord}_{p_3}(n)}\prod_{p_4 \in S_4}p_4^{\operatorname{ord}_{p_4}(n)}.
   \end{equation}

   Employing \eqref{heckedef4}, along with our computed eigenvalues, we obtain
   
   \begin{equation}
   \label{a1piece}
[q^{p^{\nu}}]A_1(q) = \left\{ \begin{array}{ll}
       1 &  p=47,\\
       1+\nu &  p \in S_1, \\
       U(\nu) &  p \in S_2, \\
       V(\nu) &  p \in S_3, \\
       \frac{(-1)^{\nu}+1}{2} &  p \in S_4,
     \end{array}
     \right.
\end{equation}
and
      \begin{equation}
   \label{a2piece}
[q^{p^{\nu}}]A_2(q) = \left\{ \begin{array}{ll}
       1 &  p=47,\\
       1+\nu &  p \in S_1, \\
       V(\nu) &  p \in S_2, \\
       U(\nu) &  p \in S_3, \\
       \frac{(-1)^{\nu}+1}{2} &  p \in S_4,
     \end{array}
     \right.
\end{equation}
where

   \begin{equation}
   U(n):=\df{\sin(2\pi(n+1)/5)}{\sin(2\pi/5)},
   \end{equation}
   and
   \begin{equation}
   V(n):=\df{\sin(4\pi(n+1)/5)}{\sin(4\pi/5)},
   \end{equation}
   are arithmetic functions of period 5. Using the multiplicativity of $A_1(q)$ and $A_2(q)$, along with \eqref{a1piece} and \eqref{a2piece}, we have
   \begin{equation}
   \label{a1def}
   [q^n]A_{1}(q)= \Delta(n)\prod_{p_2 \in S_2}U({\operatorname{ord}_{p_2}(n)})\prod_{p_3 \in S_3}V({\operatorname{ord}_{p_3}(n)}),
   \end{equation}
   and
   \begin{equation}
   \label{a2def}
   [q^n]A_{2}(q)= \Delta(n)\prod_{p_2 \in S_2}V({\operatorname{ord}_{p_2}(n)})\prod_{p_3 \in S_3}U({\operatorname{ord}_{p_3}(n)}),
   \end{equation}
   where 
	\[
	\Delta(n):= \prod_{p_1 \in S_1}(1+{\operatorname{ord}_{p_1}(n)})\prod_{p_4 \in S_4}\df{1+(-1)^{\operatorname{ord}_{p_4}(n)}}{2}.
	\]  
	Employing \eqref{47diffrel}, \eqref{a1def}, and \eqref{a2def}, we obtain
  
  \begin{equation}
  \label{aform1}
   a(n) = \Delta(n)\tf{P(n)}{\sqrt{5}},
   \end{equation}
   where
   \begin{equation}
   \label{pin}
   P(n) =\prod_{p_2 \in S_2}U(\operatorname{ord}_{p_2}(n))\prod_{p_3 \in S_3}V(\operatorname{ord}_{p_3}(n)) - \prod_{p_2 \in S_2}V(\operatorname{ord}_{p_2}(n))\prod_{p_3 \in S_3}U(\operatorname{ord}_{p_3}(n)).
   \end{equation}
	We explore the functions $U(n)$ and $V(n)$, and find a different representation for both $P(n)$ and $a(n)$. Since $U(n)$ and $V(n)$ are periodic, we can tabulate their explicit values
   
   \begin{table}[htb]  
	\caption{} \label{47tab3}
     \begin{center} 
       \begin{tabular}{|l|c|c|}              \hline 
         $n$ & $U(n)$ & $V(n)$ \\ \hline
         $n \equiv 0 \pmod{5}$   & 1    & 1            \\ \hline
         $n \equiv 1 \pmod{5}$     & $-\mu$    & $-\lambda$           \\ \hline
         $n \equiv 2 \pmod{5}$      & $\mu$    & $\lambda$      \\ \hline
         $n \equiv 3 \pmod{5}$      & $-1$    & $-1$     \\ \hline
         $n \equiv 4 \pmod{5}$      & 0    & 0     \\ \hline
       \end{tabular}
			\begin{flushright}
			 .
			 \end{flushright}
     \end{center}
   \end{table}
   
   Given a positive integer $n$, we define $r_{i}$ to be the number of primes $p_2 \in S_2$ such that $\operatorname{ord}_{p_2}(n)\equiv i\pmod{5}$. Similarly, $s_{i}$ is the number of primes $p_3 \in S_3$ such that $\operatorname{ord}_{p_3}(n)\equiv i\pmod{5}$.\\
   
   \noindent
   We have
   \begin{equation}
   \prod_{p_2 \in S_2}U(\operatorname{ord}_{p_2}(n)) =\delta_{r_4,0} \cdot (-1)^{r_1+r_3}\cdot\mu^{r_1+r_2},
   \end{equation}
	and
   \begin{equation}
   \prod_{p_3 \in S_3}V(\operatorname{ord}_{p_3}(n)) = \delta_{s_4,0}\cdot(-1)^{s_1+s_3}\cdot\lambda^{s_1+s_2},
   \end{equation}
   where $\delta_{i,j}$ is 1 if $i=j$ and 0 otherwise.
   
   Then
   \begin{equation}
   \label{calc1}
   \prod_{p_2 \in S_2}U(\operatorname{ord}_{p_2}(n))\prod_{p_3 \in S_3}V(\operatorname{ord}_{p_3}(n)) =\delta_{r_4+s_4,0}\cdot (-1)^{r_2+r_3+s_1+s_3}\cdot\lambda^{s_1+s_2-r_1-r_2},
   \end{equation}
   and
   \begin{equation}
   \label{calc2}
   \prod_{p_2 \in S_2}V(\operatorname{ord}_{p_2}(n))\prod_{p_3 \in S_3}U(\operatorname{ord}_{p_3}(n)) = \delta_{r_4+s_4,0}\cdot(-1)^{r_2+r_3+s_1+s_3}\cdot\mu^{s_1+s_2-r_1-r_2}.
   \end{equation}
   
   \noindent
   Subtracting \eqref{calc2} from \eqref{calc1} yields
   \begin{equation}
   \label{calc3}
   P(n)= (-1)^{r_2+r_3+s_1+s_3}\cdot\delta_{r_4+s_4,0}\cdot\left(\lambda^{s_1+s_2-r_1-r_2}-\mu^{s_1+s_2-r_1-r_2}\right),
   \end{equation}
	and plugging \eqref{calc3} into \eqref{aform1} yields an alternate representation for $a(n)$.

	We can simplify further by employing the identity
	\begin{equation}
	\label{sayw}
	\df{\lambda^x - \mu^x}{\sqrt{5}} = b(x-1),
	\end{equation}
	where 
	\[
	b(L):= \sum_{j=-L}^L(-1)^j\binom{L}{\lceil \tf{L+5j}{2} \rceil},
	\]
	$\lceil \cdot \rceil$ is the ceiling function, and $x$ is an integer greater than $0$. For the case $x< 0$ we can use
	\[
	\df{\lambda^{-|x|} - \mu^{-|x|}}{\sqrt{5}} = (-1)^{x+1}\df{\lambda^{|x|} - \mu^{|x|}}{\sqrt{5}},
	\]
	in order to employ \eqref{sayw}. Noting that $x=0$ implies $\tf{\lambda^x - \mu^x}{\sqrt{5}}=0$, we come to
	\begin{equation}
	a(n) = \left\{ \begin{array}{ll}
       \delta_{r_4+s_4,0}\df{}{}\cdot(-1)^{r_2+r_3+s_1+s_3}\cdot \Delta(n)\cdot b(|s_1+s_2-r_1-r_2|-1) & s_1+s_2-r_1-r_2>0 ,\\
       \delta_{r_4+s_4,0}\df{}{}\cdot(-1)^{r_1+r_3+s_2+s_3+1}\cdot \Delta(n)\cdot b(|s_1+s_2-r_1-r_2|-1) & s_1+s_2-r_1-r_2<0 ,\\
       0& s_1+s_2-r_1-r_2=0. \\
     \end{array}
     \right.
	\end{equation}

   \section{Weight 1 Eta-Product of Level $71$}
	
   \label{examples71}

        In this section we determine the Fourier coefficients of $q^{3}E(q)E(q^{71})$. 
 We have CL$(-71) \cong C_7$ and the reduced forms are
\begin{center}
\begin{tabular}{ | l | l | l | }
  \hline     
  \multicolumn{2}{|c|}{CL$(-71) \cong C_7 $}& $\left(\tf{-71}{p}\right)$\\
  \hline                   
  Principal Genus & $(1,1,18),(2,\pm 1,9),(3,\pm 1,6),(4,\pm 3,5)$ &+1 \\ \hline 
\end{tabular}
\begin{flushright}
			 .
			 \end{flushright}
   \end{center}
   In the above table, $p$ is taken to be coprime to $-71$ and represented by the given genus.
	
   Taking $m=1$ and $s=3$ in Theorem \ref{6mthm} yields
   \begin{equation}
   \label{diff171}
   \sum_{n>0} a(n)q^n:=\df{B(6,1,3,q) - B(6,5,4,q)}{2}=\df{B(3,1,6,q) - B(4,3,5,q)}{2}=q^{3}E(q)E(q^{71}).
   \end{equation}
  Note that $q^{3}E(q)E(q^{71})$ is not multiplicative. Throughout this example we will let  $\alpha:=2\cos(\tf{2\pi}{7})$, $\beta:=2\cos(\tf{4\pi}{7})$,$\gamma:=2\cos(\tf{6\pi}{7})$. Using Table \ref{sunwilmc}, we find
      
   \begin{equation}
	\label{71a1}
   A_1(q):=\df{B(1,1,18,q) +\gamma B(2,1,9,q)+\alpha B(4,3,5,q)+\beta B(3,1,6,q)}{2},
   \end{equation}
   \begin{equation}
	\label{71a2}
   A_2(q):=\df{B(1,1,18,q) +\alpha B(2,1,9,q)+\beta B(4,3,5,q)+\gamma B(3,1,6,q)}{2},
   \end{equation}
	and
	\begin{equation}
	\label{71a3}
   A_3(q):=\df{B(1,1,18,q) +\beta B(2,1,9,q)+\gamma B(4,3,5,q)+\alpha B(3,1,6,q)}{2},
   \end{equation}
	are multiplicative.	Note that $\alpha, \beta, \gamma$ are the roots of $F(x):=x^3 +x^2 -2x -1$.
	
	%Hence we have the relations
	%\begin{equation}
	%\alpha +\beta+\gamma =-1,
	%\end{equation}
	
	%\begin{equation}
	%\alpha\cdot \beta\cdot \gamma =1,
	%\end{equation}
	
		%\begin{equation}
	%\alpha\cdot \cos(\tf{4\pi}{7})+\alpha\cdot \cos(\tf{6\pi}{7})+\beta\cdot \cos(\tf{6\pi}{7}) =-1,
	%\end{equation}
	
		%\begin{equation}
	%4\cos^2(\tf{2\pi}{7})+4\cos^2(\tf{4\pi}{7})+4\cos^2(\tf{6\pi}{7})=5.
	%\end{equation}

	We show $A_1(q), A_2(q), A_3(q)$ are eigenforms for all Hecke operators, and we find their eigenvalues. We then derive a formula for their coeffcients, and exploit
   \begin{align*}
	&(\beta-\alpha)\cdot \tf{A_1(q)}{7}\\
	+&(\gamma-\beta)\cdot \tf{A_2(q)}{7}\\
	+&(\alpha-\gamma)\cdot \tf{A_3(q)}{7}\\
	=&\sum_{n>0} a(n)q^n = q^{3}E(q)E(q^{71}).
	\end{align*}
   to find the Fourier coefficients of $ q^{3}E(q)E(q^{71})$.\\

   We consider the action of $T_p$ on the associated theta series of forms of discriminant $-71$. We separate cases according to the value of $\left(\tf{-71}{p}\right)$. 
	\textbf{
\begin{flushleft}
Case 1: $\left(\df{-71}{p}\right)=1$.
\end{flushleft}
}
Table \ref{71tab1}  gives the explicit action of $T_p$ on the theta series associated with forms of discriminant $-71$.

   \begin{table}[htb]  
	\caption{} \label{71tab1}
     \begin{center} 
		\scalebox{0.81}{
       \begin{tabular}{|l|c|c|c|c|}              \hline 
         $B(a,b,c,q)$ & $(1,1,18,p)>0$ & $(2,1,9,p)>0$ & $(4,3,5,p)>0$&  $(3,1,6,p)>0$\\ \hline
         $B(1,1,18,q) \xrightarrow{T_p}$   & $2B(1,1,18,q)$    & $2B(2,1,9,q)$ &$2B(4,3,5,q)$  & $2B(3,1,6,q)$            \\ \hline
         $B(2,1,9,q) \xrightarrow{T_p}$     & $2B(2,1,9,q)$    & $B(1,1,18,q) + B(4,3,5,q)$ &$B(2,1,9,q) + B(3,1,6,q)$  &  $B(3,1,6,q) + B(4,3,5,q)$          \\ \hline
				 $B(4,3,5,q) \xrightarrow{T_p}$      & $2B(4,3,5,q)$   & $B(2,1,9,q) + B(3,1,6,q)$  & $B(1,1,18,q) + B(3,1,6,q)$ & $B(2,1,9,q) + B(4,3,5,q)$   \\ \hline
         $B(3,1,6,q) \xrightarrow{T_p}$      & $2B(3,1,6,q)$   & $B(3,1,6,q) + B(4,3,5,q)$  &$B(2,1,9,q) + B(4,3,5,q)$  & $B(1,1,18,q) + B(2,1,9,q)$    \\ \hline
       \end{tabular}
			}
			\begin{flushright}
			 .
			 \end{flushright}
     \end{center}
   \end{table}
	\noindent
	We comment that Table \ref{71tab1} is consistent with the formulas of Hecke \cite[p.794]{hecke}.
	
	\noindent
   With the aid Table \ref{71tab1}, we compute the action of $T_p$ on $A_1(q), A_2(q),A_3(q)$: 
	
   \begin{table}[htb]  
	\caption{} \label{71tab2}
     \begin{center} 
       \begin{tabular}{|l|c|c|c|c|}              \hline 
         $ $ & $(1,1,18,p)>0$ & $(2,1,9,p)>0$ & $(4,3,5,p)>0$   & $(3,1,6,p)>0$ \\ \hline
				$A_1(q) \xrightarrow{T_p}$   & $2A_1(q)$    & $\gamma A_1(q)$ & $\alpha A_1(q)$  & $\beta A_1(q)$            \\ \hline
         $A_2(q) \xrightarrow{T_p}$   & $2A_2(q)$    & $\alpha A_2(q)$ & $\beta A_2(q)$  & $\gamma A_2(q)$            \\ \hline
				$A_3(q) \xrightarrow{T_p}$   & $2A_3(q)$    & $\beta A_3(q)$ & $\gamma A_3(q)$  & $\alpha A_3(q)$            \\ \hline
       \end{tabular}
       \label{tptab271}
			\begin{flushright}
			 .
			 \end{flushright}
     \end{center}
   \end{table}
	
	\noindent
   Hence, $A_1(q), A_2(q),A_3(q)$ are eigenforms for $T_p$, with eigenvalues $2,\alpha,\beta,\gamma$ when $\left(\tf{-71}{p}\right)=1$.\\
      \textbf{
\begin{flushleft}
Case 2: $\left(\df{-71}{p}\right)=-1$.
\end{flushleft}
}

\noindent
A prime $p$ with $\left(\tf{-71}{p}\right)=-1$ implies $(a,b,c,p)= 0$ for any form of discriminant $-71$. Thus, $A_1(q), A_2(q),A_3(q)$ are eigenforms for such $T_p$ with eigenvalue 0.
 \textbf{
\begin{flushleft}
Case 3: $\left(\df{-71}{p}\right)=0$.
\end{flushleft}
}
    \noindent
		We find that $A_1(q), A_2(q),A_3(q)$ are eigenforms for $T_{71}$ with eigenvalue 1.\\
   \noindent
   We have shown $A_1(q), A_2(q),A_3(q)$ are eigenforms for all Hecke operators, and have found their corresponding eigenvalues. \\

    We now state criteria to determine when $(a,b,c,p)>0$ for a form of discriminant $-71$. We are able to distinguish each case by examining the Weber Class polynomial for discriminant $-71$, $W_{-71}(z):= z^7 +z^6 -z^5 -z^4 -z^3 +z^2+2z-1$. We only consider primes $p$ with $\left(\tf{-71}{p}\right)\neq-1$. We follow the method of \cite{cox} and \cite{voight} to find:

\begin{enumerate}

      \item $(1,1,18,p)>0$ iff $p=71$ or rem$(z^p,W_{-71}(z)) \equiv z \pmod{p}$,
     
     \vspace{.3cm}
     
     \item  $(2,1,9,p)>0$ iff rem$(142z^p,W_{-71}(z)) \equiv $\\ \scalebox{.75}{$(2r_p -142)z^6+(6r_p-142)z^5+(-5r_p+71)z^4 +(-6r_p+142)z^3 +(-5r_p+213)z^2 +(-15r_p-71)z -213 +11r_p \pmod{p}$},

      \vspace{.3cm}
     
     \item  $(4,3,5,p)$ iff rem$(142z^p,W_{-71}(z)) \equiv$ \\ $20r_pz^6+16r_pz^5-10r_p z^4 -20r_pz^3 +(-27r_p-71)z^2 +(13r_p-71)z +20r_p \pmod{p}$,
		
		\vspace{.3cm}
     
     \item  $(3,1,6,p)>0$ iff rem$(142z^p,W_{-71}(z)) \equiv $\\ \scalebox{.78}{$(10r_p +142)z^6+(10r_p+142)z^5+(r_p-71)z^4 +(-2r_p-142)z^3 +(-4r_p-142)z^2 +(5r_p+71)z +142 +4r_p \pmod{p}$},

     \end{enumerate}
     
      \vspace{.4cm}
			\noindent
     where $r_p$ is defined by $(r_p)^2 \equiv -71 \pmod{p}$.\\
     
     Define $S_1,S_2,S_3,S_4$ to be the set of primes $p\neq 71$ represented by $(1,1,18), (2,1,9),$ $(4,3,5)$, and $(3,1,6)$, respectively. Let $S_5$ be the set of primes $p$ with $\left(\tf{-71}{p}\right)=-1$.\\
    We factor a positive integer $n$ as
   \begin{equation}
   \label{nfactor71}
   n= 71^{\operatorname{ord}_{71}(n)}\prod_{p_1 \in S_1}p_1^{\operatorname{ord}_{p_1}(n)}\prod_{p_2 \in S_2}p_2^{\operatorname{ord}_{p_2}(n)}\prod_{p_3 \in S_3}p_3^{\operatorname{ord}_{p_3}(n)}\prod_{p_4 \in S_4}p_4^{\operatorname{ord}_{p_4}(n)}\prod_{p_5 \in S_5}p_5^{\operatorname{ord}_{p_5}(n)}.
   \end{equation}

   Letting
	\[
	\Delta(n):=\prod_{p_1 \in S_1}(1+{\operatorname{ord}_{p_1}(n)})\prod_{p_5 \in S_5}\df{1+(-1)^{\operatorname{ord}_{p_5}(n)}}{2},
	\]
	we obtain
	\begin{equation}
   \label{a3def71}
   [q^n]A_{1}(q)=\Delta(n)\prod_{p_2 \in S_2}W({\operatorname{ord}_{p_2}(n)})\prod_{p_3 \in S_3}U({\operatorname{ord}_{p_3}(n)})\prod_{p_4 \in S_4}V({\operatorname{ord}_{p_4}(n)}),
   \end{equation}
   \begin{equation}
   \label{a1def71}
   [q^n]A_{2}(q)=\Delta(n)\prod_{p_2 \in S_2}U({\operatorname{ord}_{p_2}(n)})\prod_{p_3 \in S_3}V({\operatorname{ord}_{p_3}(n)})\prod_{p_4 \in S_4}W({\operatorname{ord}_{p_4}(n)}),
   \end{equation}
  \begin{equation}
   \label{a2def71}
   [q^n]A_{3}(q)=\Delta(n)\prod_{p_2 \in S_2}V({\operatorname{ord}_{p_2}(n)})\prod_{p_3 \in S_3}W({\operatorname{ord}_{p_3}(n)})\prod_{p_4 \in S_4}U({\operatorname{ord}_{p_4}(n)}),
   \end{equation}
   where 
	\begin{equation}
   U(n):=\df{\sin(2\pi(n+1)/7)}{\sin(2\pi/7)},
   \end{equation}
   \begin{equation}
   V(n):=\df{\sin(4\pi(n+1)/7)}{\sin(4\pi/7)},
   \end{equation}
	and
	 \begin{equation}
   W(n):=\df{\sin(6\pi(n+1)/7)}{\sin(6\pi/7)}.
   \end{equation}

	Since $U(n), V(n)$ and $W(n)$ are periodic, we can tabulate their explicit values:
   
   \begin{table}[htb] 
	\caption{} \label{71tab3}
     \begin{center} 
       \begin{tabular}{|l|c|c|c|}              \hline 
         $n$ & $U(n)$ & $V(n)$ & $W(n)$\\ \hline
         $n \equiv 0 \pmod{7}$   & 1    & 1   &1         \\ \hline
         $n \equiv 1 \pmod{7}$     & $\alpha$    & $\beta$ & $\gamma$         \\ \hline
         $n \equiv 2 \pmod{7}$      & $-\gamma^{-1}$    & $-\alpha^{-1}$ & $-\beta^{-1}$      \\ \hline
         $n \equiv 3 \pmod{7}$      & $\gamma^{-1}$    & $\alpha^{-1}$ & $\beta^{-1}$     \\ \hline
         $n \equiv 4 \pmod{7}$      & $-\alpha$    & $-\beta$ & $-\gamma$   \\ \hline
				$n \equiv 5 \pmod{7}$      & $-1$    & $-1$  &$-1$   \\ \hline
				$n \equiv 6 \pmod{7}$      & $0$    & 0 &0    \\ \hline
       \end{tabular}
			\begin{flushright}
			 .
			 \end{flushright}
     \end{center}
   \end{table}

	To ease notation we define
		\[
	\Delta_1(n):=\prod_{p_2 \in S_2}W({\operatorname{ord}_{p_2}(n)})\prod_{p_3 \in S_3}U({\operatorname{ord}_{p_3}(n)})\prod_{p_4 \in S_4}V({\operatorname{ord}_{p_4}(n)}).
	\]
\[
\Delta_2(n) := \prod_{p_2 \in S_2}U({\operatorname{ord}_{p_2}(n)})\prod_{p_3 \in S_3}V({\operatorname{ord}_{p_3}(n)})\prod_{p_4 \in S_4}W({\operatorname{ord}_{p_4}(n)}),
\]
	\[
	\Delta_3(n):=\prod_{p_2 \in S_2}V({\operatorname{ord}_{p_2}(n)})\prod_{p_3 \in S_3}W({\operatorname{ord}_{p_3}(n)})\prod_{p_4 \in S_4}U({\operatorname{ord}_{p_4}(n)}),
	\]

	Thus we come to 
	\[
	a(n)=\tf{\Delta(n)}{7}P(n),
	\]
	where
	%\[
	%P(n)=(\gamma-\beta)\Delta_1(n) + (\alpha-\gamma)\Delta_2(n) + (\beta-\alpha)\Delta_3(n)
	%\]

	\begin{align*}
	P(n)=&(\beta-\alpha)\Delta_1(n)\\
	+&(\gamma-\beta)\Delta_2(n)\\
	+&(\alpha-\gamma)\Delta_3(n).\\
	\end{align*}

	Similar to our approach for discriminant $-47$ we can find another representation of $a(n)$ by using the tabulated values of $U,V$ and $W$.\\

   Given a positive integer $n$, we define $r_{i}$ to be the number of primes $p_2 \in S_2$ such that $\operatorname{ord}_{p_2}(n)\equiv i\pmod{7}$. Similarly, $s_{i}$ is the number of primes $p_3 \in S_3$ such that $\operatorname{ord}_{p_3}(n)\equiv i\pmod{7}$, and $t_{i}$ is the number of primes $p_4 \in S_4$ such that $\operatorname{ord}_{p_4}(n)\equiv i\pmod{7}$. We can now write $\Delta_1,\Delta_2,\Delta_3$ in terms of the new notation.\\
   We use the tabulated values of $U(n),V(n),W(n)$ to find
	\begin{align*}
	 \prod_{p_2 \in S_2}U({\operatorname{ord}_{p_2}(n)}) &= \delta_{r_6,0}(-1)^{r_2+r_4+r_5}\alpha^{r_1+r_4}\gamma^{-r_2-r_3},\\
	\prod_{p_3 \in S_3}V({\operatorname{ord}_{p_3}(n)}) &= \delta_{s_6,0}(-1)^{s_2+s_4+s_5}\beta^{s_1+s_4}\alpha^{-s_2-s_3},\\
	\prod_{p_4 \in S_4}W({\operatorname{ord}_{p_4}(n)}) &= \delta_{t_6,0}(-1)^{t_2+t_4+t_5}\gamma^{t_1+t_4}\beta^{-t_2-t_3},\\
	\end{align*}
	and we note that the symmetry between the three products is the same symmetry seen in the table of values for $U(n),V(n),W(n)$.
   Thus we come to
   \begin{equation}
   \Delta_1(n) =\delta_{r_6+s_6+t_6,0} \cdot (-1)^{r_2+r_4+r_5+s_2+s_4+s_5+t_2+t_4+t_5}\cdot \alpha^{s_1+s_4-t_2-t_3}\beta^{t_1+t_4-r_2-r_3}\gamma^{r_1+r_4-s_2-s_3}.
   \end{equation}
	
	We exploit symmetry to find that $\Delta_2(n)$ is equal to $\Delta_1(n)$ under the permutation $\sigma:=(\alpha,\beta,\gamma)$, and $\Delta_3(n)$ is equal to $\Delta_1(n)$ under the permutation $\sigma^2$. Explicitly
	\begin{align*}
	 \Delta_2(n) &= \sigma(\Delta_1(n)),\\
	\Delta_3(n) &= \sigma^2(\Delta_1(n)),\\
	\end{align*}
   and thus we may write
	\begin{align*}
	P(n)=&(\beta-\alpha)\Delta_1(n)\\
	+\sigma(&(\beta-\alpha)\Delta_1(n))\\
	+\sigma^2(&(\beta-\alpha)\Delta_1(n)).
	\end{align*}
	
	Using the relation
	\[
	a(n)=\tf{\Delta(n)}{7}P(n),
	\]
	along with the formula for $P(n)$, yields a general formula for $a(n)=[q^n]q^{3}E(q)E(q^{71})$.

	We now consider some special values of $n$, and the corresponding formula for $a(n)$.\\	
	Given a prime $p$ which is represented by a form of discriminant $-71$, we list the values $a(p^\nu)$:
   \begin{table}[htb]  
	\caption{} \label{71tab4}
     \begin{center} 
       \begin{tabular}{|l|c|c|c|c|}              \hline 
          & $(1,1,18,p)>0$ & $(2,1,9,p)>0$ & $(4,3,5,p)>0$&  $(3,1,6,p)>0$\\ \hline
         $a(p^\nu)$,~ $\nu \equiv 0 \pmod{7}$   & 0 & 0    & 0   &0         \\ \hline
				 $a(p^\nu)$, $\nu \equiv 1 \pmod{7}$   & 0 & 0    & $-1$   &1         \\ \hline
				 $a(p^\nu)$, $\nu \equiv 2 \pmod{7}$   & 0 & $-1$   & 1  &0         \\ \hline
				 $a(p^\nu)$, $\nu \equiv 3 \pmod{7}$   & 0 & $1$    & $-1$   &0         \\ \hline
			 $a(p^\nu)$, 	$\nu \equiv 4 \pmod{7}$   & 0 & 0    & 1   &$-1$         \\ \hline
				 $a(p^\nu)$, $\nu \equiv 5 \pmod{7}$   & 0 & 0    & 0   &0         \\ \hline
				 $a(p^\nu)$, $\nu \equiv 6 \pmod{7}$   & 0 & 0    & 0   &0         \\ \hline

       \end{tabular}
			\begin{flushright}
			 .
			 \end{flushright}
     \end{center}
   \end{table}

  It is interesting to examine the special case $n= p_1^{\nu_1}p_2^{\nu_2} \ldots p_r^{\nu_r}$ with $p_1,\ldots ,p_r \in S_2$. It can be shown by induction that we have
\begin{equation}
\label{71a}
a(n) =\delta_{r_6,0} (-1)^{r_1+r_3+r_5}G(r_3+r_2,r_1+r_4),
\end{equation}
   where $G(L,M)$ is given by
	\begin{equation}
\label{71b}
	G(L,M) =\sum_{j=-L-M}^{L+M}T(L,M,2+7j)-T(L,M,1+7j),
	\end{equation}
	and the trinomial coefficient $T(L,M,a)$ is defined by
	\begin{equation}
\label{71c}
	\sum_{a=-L-\lceil M/2\rceil}^{L+\lceil M/2\rceil}T(L,M,a)x^a=\df{(x^2+x+1)^L(x+1)^M}{x^{L+\lceil M/2\rceil}}.
	\end{equation}
Analogous formulas can be derived when all the prime divisors of $n$ are contained in a single $S_i$ for $1\leq i \leq 5$. The general formula is more involved and will be discussed elsewhere.

	\vspace{.8cm}

   \section{Weight 1 eta-quotients of level 135, 648, 1024, 1872}
   \label{-135}
	
	\subsection{Eta-Products of Level 135}
	\label{135sub}
	
    Applying Theorem \ref{6mthm} with $m=9, s=1$ yields
   \begin{equation}
   \label{diff1135}
   \df{B(54,9,1,q) - B(54,45,10,q)}{2}=\df{B(1,1,34,q) - B(4,3,9,q)}{2}=qE(q^9)E(q^{15}),
   \end{equation}
   and $m=3, s=2$ gives
   \begin{equation}
   \label{diff2135}
   \df{B(18,3,2,q) - B(18,15,5,q)}{2}=\df{B(2,1,17,q) - B(5,5,8,q)}{2}=q^2E(q^3)E(q^{45}).  
	 \end{equation}
	
 We have CL$(-135) \cong C_6 $ and the reduced forms listed according to genus are
\begin{center}
\begin{tabular}{ | l | l | l | l | }
  \hline     
  \multicolumn{2}{|c|}{CL$(-135) \cong C_6 $}& $\left(\tf{p}{5}\right)$ & $\left(\tf{p}{3}\right)$ \\
  \hline                   
  Principal Genus & $(1,1,34)$, $(4,3,9)$, $(4,-3,9)$ &$+1$ &$+1$ \\ \hline
  Second Genus & $(5,5,8)$, $(2,1,17)$, $(2,-1,17)$ & $-1$ &$-1$ \\ \hline  
\end{tabular}
\begin{flushright}
			 .
			 \end{flushright}
   \end{center}
  In the above table, $p$ is taken to be coprime to $-135$ and represented by the given genus.

   With the aid of Table \ref{sunwilmc} with $A=(2,1,17)$ and $\langle A \rangle \cong C_6$, we find 
   \begin{align}
   \label{mult1135}   
   A(q):= \df{B(1,1,34,q)-B(4,3,9,q)+ B(2,1,17,q)-B(5,5,8,q)}{2}
   \end{align}
   is multiplicative. Note that \eqref{mult1135} is the sum of \eqref{diff1135} and \eqref{diff2135}. Similar to the previous example, we show \eqref{mult1135} is an eigenform for all Hecke operators by examining the action of $T_p$ on the forms of discriminant $-135$.\\
	We remark that both $qE(q^9)E(q^{15}) \pm q^2E(q^3)E(q^{45})$ are eigenforms for all $T_p$, and that $qE(q^9)E(q^{15})$, $q^2E(q^3)E(q^{45})$ are related to the quadratic field $\mathbb{Q}(\sqrt{-15})$, as explained by K$\ddot{\mbox{o}}$hler in \cite[p.252]{kohler}.
%The \textit{eta}-products $qE(q^9)E(q^{15})$ and $q^2E(q^3)E(q^{45})$ are related to the quadratic field $\mathbb{Q}(\sqrt{-15})$ as explained by K$\ddot{\mbox{o}}$hler in \cite[p.252]{kohler}.

    \textbf{
\begin{flushleft}
Case 1: $\left(\df{-135}{p}\right)=1$.
\end{flushleft}
}

Table \ref{135tab1} gives the explicit action of $T_p$ on the theta series associated with forms of discriminant $-135$.

   \begin{table}[htb]  
	\caption{} \label{135tab1}
     \begin{center} 
     \scalebox{0.9}{
       \begin{tabular}{|l|c|c|c|c|}              \hline 
         $B(a,b,c,q)$ & $(1,1,34,p)>0$ & $(4,3,9,p)>0$ & $(2,1,17,p)>0$ & $(5,5,8,p)>0$\\ \hline
         $B(1,1,34,q) \xrightarrow{T_p}$   & $2B(1,1,34,q)$    & $2B(4,3,9,q)$ & $2B(2,1,17,q)$    & $2B(5,5,8,q)$         \\ \hline
         $B(4,3,9,q) \xrightarrow{T_p}$     & $2B(4,3,9,q)$    & $B(1,1,34,q) + B(4,3,9,q)$ & $B(2,1,17,q) + B(5,5,8,q)$   & $2B(2,1,17,q)$ \\ \hline
         $B(2,1,17,q) \xrightarrow{T_p}$      & $2B(2,1,17,q)$   & $B(5,5,8,q) + B(2,1,17,q)$  & $B(1,1,34,q) + B(4,3,9,q)$  & $2B(4,3,9,q)$ \\ \hline
         $B(5,5,8,q) \xrightarrow{T_p}$      & $2B(5,5,8,q)$   & $2B(2,1,17,q)$  & $2B(4,3,9,q)$   & $2B(1,1,34,q)$   \\ \hline
       \end{tabular}
       }
			\begin{flushright}
			 .
			 \end{flushright}
     \end{center}
   \end{table}
\noindent
We comment that Table \ref{135tab1} is consistent with the formulas of Hecke \cite[p.794]{hecke}.

   \noindent
   Using Table \ref{135tab1}, we find the action of $T_p$ on $A(q)$:
	
   \begin{table}[htb]  
	\caption{} \label{135tab2}
     \begin{center} 
       \begin{tabular}{|l|c|c|c|c|}              \hline 
                 & $(1,1,34,p)>0$ & $(4,3,9,p)>0$ & $(2,1,17,p)>0$ & $(5,5,8,p)>0$\\ \hline
         $A(q) \xrightarrow{T_p}$   & $2A(q)$    & $-A(q)$ & $A(q)$    & $-2A(q)$     \\ \hline
       \end{tabular}
			\begin{flushright}
			 .
			 \end{flushright}
       \label{ttab2}
     \end{center}
   \end{table}

         \textbf{
\begin{flushleft}
Case 2: $\left(\df{-135}{p}\right)=-1$.
\end{flushleft}
}
\noindent
   A form $(a,b,c)$ of discriminant $-135$ has $(a,b,c,p)=0$ when $\left(\tf{-135}{p}\right)=-1$, and hence \eqref{mult1135} is an eigenform for such $T_p$ with eigenvalue 0.

            \textbf{
\begin{flushleft}
Case 3: $\left(\df{-135}{p}\right)=0$.
\end{flushleft}
}

The below tables give the explicit action of $T_3$ and $T_5$ on the theta series associated with forms of discriminant $-135$.

\vspace{.4cm}

         \begin{table}[htb] 
				\caption{} \label{135tab3}
				\begin{center}
			 \begin{tabular}{|l|c|} \hline  
									$B(1,1,34,q) \xrightarrow{T_3}$   & $B(1,1,4,q^3)$         \\ \hline
							 $B(4,3,9,q) \xrightarrow{T_3}$     & $B(1,1,4,q^3)$    \\ \hline
         $B(2,1,17,q) \xrightarrow{T_3}$    & $B(2,1,2,q^3)$   \\ \hline
         $B(5,5,8,q) \xrightarrow{T_3}$      & $B(2,1,2,q^3)$     \\ \hline
				\end{tabular}\hspace{1cm}
			 \begin{tabular}{|l|c|} \hline  
								$B(1,1,34,q) \xrightarrow{T_5}$   & $B(5,5,8,q)$         \\ \hline
								$B(4,3,9,q) \xrightarrow{T_5}$     & $B(2,1,17,q)$    \\ \hline
         $B(2,1,17,q) \xrightarrow{T_5}$    & $B(4,3,9,q)$   \\ \hline
         $B(5,5,8,q) \xrightarrow{T_5}$      & $B(1,1,34,q)$    \\ \hline
       \end{tabular}
\end{center}
\end{table}

   Using the above tables, we see that $A(q)$ is an eigenform for $T_5$ with eigenvalue $-1$, and an eigenform for $T_3$ with eigenvalue 0.\\
   We have shown \eqref{mult1135} is an eigenform for all Hecke operators, and have found the corresponding eigenvalues.\\

   We now state criteria to determine when $(a,b,c,p)>0$ for a form of discriminant $-135$. We examine the factorization of the Weber class polynomial $W_{-135}(x)=x^6 - x^3 - 1$ modulo $p$. Following the method of \cite{cox} and \cite{voight}, we find that for a prime $p$ with $\left(\tf{-135}{p}\right)=1$, we have
      \begin{enumerate}
   \item{ $p$ is represented by the form $(1,1,34)$ if and only if $W_{-135}(x)$ splits completely modulo $p$,}\\
   
   \item{ $p$ is represented by the form $(4,3,9)$  if and only if $W_{-135}(x)$ factors into two irreducible cubic polynomials modulo $p$,}\\   
   
   \item{$p$ is represented by the form $(5,5,8)$  if and only if $W_{-135}(x)$ factors into three irreducible quadratic polynomials modulo $p$,}\\   
   
   \item{ $p$ is represented by the form $(2,1,17)$  if and only if $W_{-135}(x)$ remains irreducible modulo $p$.}\\   
   \end{enumerate}
   
   We define $S_1, S_2, S_3, S_4$ to be the set of primes $p\neq 5$ represented by $(1,1,34)$, $(5,5,8)$, $(4,3,9)$, $(2,1,17)$, respectively. We also take $S_5$ to be the set of primes $p$ with $\left(\tf{-135}{p}\right)=-1 $.    
      Employing \eqref{heckedef4} along with the previously computed eigenvalues, we obtain
      
         \begin{equation}
         \label{135alpha}
      [q^{p^\nu}]A(q) = \left\{ \begin{array}{ll}
       0 &  p=3, ~~\nu>0, \\
       (-1)^\nu &  p=5, \\
       1+\nu &  p \in S_1, \\
       (-1)^\nu(1+\nu) &  p \in S_2, \\
       U(\nu) &  p \in S_3,\\
       V(\nu) &  p \in S_4,\\
       \tf{1+(-1)^\nu}{2} &   p\in S_5,
     \end{array}
     \right.
\end{equation}
where
\begin{equation}
   U(n):=\df{\sin(2\pi(n+1)/3)}{\sin(2\pi/3)},
   \end{equation}
   and
   \begin{equation}
   V(n):=\df{\sin(\pi(n+1)/3)}{\sin(\pi/3)}.
   \end{equation}
The functions $U(n)$ and $V(n)$ are periodic and we tabulate their values in Table \ref{si}.

   \begin{table}[htb]  
	\caption{} \label{si}
     \begin{center} 
       \begin{tabular}{|l|c|c|}              \hline 
         $n$ & $U(n)$ & $V(n)$ \\ \hline
         $n \equiv 0 \pmod{6}$   & $1$    & $1$            \\ \hline
         $n \equiv 1 \pmod{6}$     & $-1$    & $1$           \\ \hline
         $n \equiv 2 \pmod{6}$      & $0$    & $0$     \\ \hline
         $n \equiv 3 \pmod{6}$      & $1$    & $-1$     \\ \hline
         $n \equiv 4 \pmod{6}$      & $-1$   & $-1$     \\ \hline
         $n \equiv 5 \pmod{6}$      & $0$    & $0$     \\ \hline
       \end{tabular}
			\begin{flushright}
			 .
			 \end{flushright}
     \end{center}
   \end{table}

   Given a positive integer $n$ we write
     \begin{equation}
  \label{135factorization}
  n=3^a5^b\prod_{p_1 \in S_1}p_1^{\operatorname{ord}_{p_1}(n)}\prod_{p_2 \in S_2}p_2^{\operatorname{ord}_{p_2}(n)}
  \prod_{p_3 \in S_3}p_3^{\operatorname{ord}_{p_3}(n)}\prod_{p_4 \in S_4}p_4^{\operatorname{ord}_{p_4}(n)}\prod_{p_5 \in S_5}p_5^{\operatorname{ord}_{p_5}(n)},
  \end{equation}
  where $a=\operatorname{ord}_{3}(n)$ and $b=\operatorname{ord}_{5}(n)$.
  Given the factorization of $n$ in \eqref{135factorization}, we employ \eqref{135alpha} to find the coefficient $[q^n]A(q)$ to be
  \begin{equation}
  \label{aform135}
   (-1)^{b+t}\cdot\delta_{a,0}\cdot\prod_{p \in S_1 \cup S_2}(1+{\operatorname{ord}_{p}(n)})\prod_{p_3 \in S_3}U(\operatorname{ord}_{p_3}(n))\prod_{p_4 \in S_4}V(\operatorname{ord}_{p_4}(n))\prod_{p_5 \in S_5}\df{1+(-1)^{\operatorname{ord}_{p_5}(n)}}{2},
   \end{equation}
  where $t$ is the number of primes factors of $n$, counting multiplicity, that are contained in $S_2$.\\
  
  Similar to the previous example, we are able to rewrite \eqref{aform135} by exploiting the periodicity of $U(n), V(n)$. Define $r_{i}$ to be the number of primes $p_3 \in S_3$ such that $\operatorname{ord}_{p_3}(n)\equiv i\pmod{3}$, and $s_{i}$ to be the number of primes $p_4 \in S_4$ such that $\operatorname{ord}_{p_4}(n)\equiv i\pmod{6}$.\\
   
   \noindent
   Given the factorization of $n$ in \eqref{135factorization} we have,
   \begin{equation}
   \prod_{p_3 \in S_3}U(\operatorname{ord}_{p_3}(n)) =\delta_{r_2,0}\cdot (-1)^{r_1},
   \end{equation}
   and
   \begin{equation}
   \prod_{p_4 \in S_4}V(\operatorname{ord}_{p_4}(n)) =\delta_{s_2+s_5,0} \cdot(-1)^{s_3+s_4}.
   \end{equation}
   We rewrite \eqref{aform135} as
   \begin{equation}
  \label{aform1352}
   [q^n]A(q) = (-1)^{b+t+r_1+s_3+s_4}\cdot\delta_{a+r_2+s_2+s_5,0}\cdot \prod_{p \in S_1 \cup S_2}(1+{\operatorname{ord}_{p}(n)})\prod_{p_5 \in S_5}\df{1+(-1)^{\operatorname{ord}_{p_5}(n)}}{2}.
   \end{equation}
   \noindent
 \eqref{aform1352} gives the Fourier coefficients of $qE(q^9)E(q^{15}) + q^2E(q^3)E(q^{45})$. We note that $[q^n]qE(q^9)E(q^{15}) \neq 0$ implies $n \equiv 1 \pmod{3}$, and $[q^n]q^2E(q^3)E(q^{45})\neq 0$ implies $n \equiv 2 \pmod{3}$. Thus we can employ congruences to extract the Fourier coefficients of each product from $qE(q^9)E(q^{15}) + q^2E(q^3)E(q^{45})$. We have
     
   \[
[q^n]qE(q^9)E(q^{15})= \left\{ \begin{array}{ll}
       [q^n]A(q) &  n\equiv 1 \imod{3}, \\
       0 &  n\equiv 0,2 \imod{3}, \\
     \end{array}
     \right.
\]
and
   \[
[q^n]q^2E(q^3)E(q^{45})= \left\{ \begin{array}{ll}
       [q^n]A(q) &  n\equiv 2 \imod{3}, \\
       0 &  n\equiv 0,1 \imod{3}. \\
     \end{array}
     \right.
\]

   We can also write the Fourier coefficients of the products in the following form
     \begin{equation}
  \label{135coef1}
   [q^n]2qE(q^9)E(q^{15}) = (1+(-1)^{b+t+s_1+s_3})[q^n]A(q),
   \end{equation}
   and
    \begin{equation}
  \label{135coef2}
   [q^n]2q^2E(q^3)E(q^{45}) = (1-(-1)^{b+t+s_1+s_3})[q^n]A(q).
   \end{equation}

	\subsection{Eta-Products of Level 648}
	\label{648sub}
	
    Applying Theorem \ref{9mthm} with $m=1, s=18$ yields
   \begin{equation}
   \label{diff1648}
   g(q):=\df{B(1,0,162,q) - B(9,6,19,q)}{2}=q\df{E^2(q^6)E(q^{9})E(q^{72})E^2(q^{108})}{E(q^{3})E(q^{12})E(q^{54})E(q^{216})},
   \end{equation}
   and $m=2, s=9$ gives
   \begin{equation}
   \label{diff2648}
   h(q):=\df{B(2,0,81,q) - B(11,10,17,q)}{2}=q^2\df{E(q^9)E^2(q^{12})E^2(q^{54})E(q^{72})}{E(q^{6})E(q^{24})E(q^{27})E(q^{108})}.  
	 \end{equation}
	We remark that both $g(q)$ and $h(q)$ are cusp forms.
 We have CL$(-648) \cong C_6 $ and the reduced forms listed according to genus are
\begin{center}
\begin{tabular}{ | l | l | l | l | }
  \hline     
  \multicolumn{2}{|c|}{CL$(-648) \cong C_6 $}& $\left(\tf{p}{3}\right)$ & $\left(\tf{-2}{p}\right)$ \\
  \hline                   
  Principal Genus & $(1,0,162)$, $(9,6,19)$, $(9,-6,19)$ &$+1$ &$+1$ \\ \hline
  Second Genus & $(2,0,81)$, $(11,10,17)$, $(11,-10,17)$ & $-1$ &$+1$ \\ \hline  
\end{tabular}
\begin{flushright}
			 .
			 \end{flushright}
   \end{center}
  In the above table, $p$ is taken to be coprime to $-648$ and represented by the given genus.

  Let
   \begin{align}
   \label{mult1648}   
   A(q):=g(q)+h(q)= \df{B(1,0,162,q)-B(9,6,19,q)+ B(2,0,81,q)-B(11,10,17,q)}{2}
   \end{align}
  Similar to the previous examples, we show \eqref{mult1648} is an eigenform for all Hecke operators by examining the action of $T_p$ on the forms of discriminant $-648$.\\

    \textbf{
\begin{flushleft}
Case 1: $\left(\df{-648}{p}\right)=1$.
\end{flushleft}
}

Table \ref{648tab1} gives the explicit action of $T_p$ on the theta series associated with forms of discriminant $-648$.

   \begin{table}[htb]  
	\caption{} \label{648tab1}
     \begin{center} 
     \scalebox{0.8}{
       \begin{tabular}{|l|c|c|c|c|}              \hline 
         $B(a,b,c,q)$ & $(1,0,162,p)>0$ & $(9,6,19,p)>0$ & $(2,0,81,p)>0$ & $(11,10,17,p)>0$\\ \hline
         $B(1,0,162,q) \xrightarrow{T_p}$   & $2B(1,0,162,q)$    & $2B(9,6,19,q)$               & $2B(2,0,81,q)$               & $2B(11,10,17,q)$         \\ \hline
         $B(9,6,19,q) \xrightarrow{T_p}$     & $2B(9,6,19,q)$    & $B(1,0,162,q) + B(9,6,19,q)$ & $2B(11,10,17,q)$           & $B(2,0,81,q)+B(11,10,17,q)$ \\ \hline
         $B(2,0,81,q) \xrightarrow{T_p}$      & $2B(2,0,81,q)$   & $2B(11,10,17,q)$             & $2B(1,0,162,q)$                & $2B(9,6,19,q)$ \\ \hline
         $B(11,10,17,q) \xrightarrow{T_p}$    & $2B(11,10,17,q)$ & $B(11,10,17,q)+B(2,0,81,q)$  & $2B(9,6,19,q)$              	 & $B(1,0,162,q)+B(9,6,19,q)$   \\ \hline
       \end{tabular}
       }
			\begin{flushright}
			 .
			 \end{flushright}
     \end{center}
   \end{table}
\noindent
We comment that Table \ref{648tab1} is consistent with the formulas of Hecke.

   \noindent
   Using Table \ref{648tab1}, we find the action of $T_p$ on $A(q)$:
	
   \begin{table}[htb]  
	\caption{} \label{648tab2}
     \begin{center} 
       \begin{tabular}{|l|c|c|c|c|}              \hline 
                 & $(1,0,162,p)>0$ & $(9,6,19,p)>0$ & $(2,0,81,p)>0$ & $(11,10,17,p)>0$\\ \hline
         $A(q) \xrightarrow{T_p}$   & $2A(q)$    & $-A(q)$ & $2A(q)$    & $-A(q)$     \\ \hline
       \end{tabular}
			\begin{flushright}
			 .
			 \end{flushright}
       \label{ttab2}
     \end{center}
   \end{table}

         \textbf{
\begin{flushleft}
Case 2: $\left(\df{-648}{p}\right)=-1$.
\end{flushleft}
}
\noindent
   A form $(a,b,c)$ of discriminant $-648$ has $(a,b,c,p)=0$ when $\left(\tf{-648}{p}\right)=-1$, and hence \eqref{mult1648} is an eigenform for such $T_p$ with eigenvalue 0.

            \textbf{
\begin{flushleft}
Case 3: $\left(\df{-648}{p}\right)=0$.
\end{flushleft}
}

The below tables give the explicit action of $T_2$ and $T_3$ on the theta series associated with forms of discriminant $-648$.

\vspace{.4cm}

         \begin{table}[htb] 
				\caption{} \label{648tab3}
				\begin{center}
			 \begin{tabular}{|l|c|} \hline  
									$B(1,0,162,q) \xrightarrow{T_2}$   & $B(2,0,81,q)$         \\ \hline
							 $B(9,6,19,q) \xrightarrow{T_2}$     & $B(11,10,17,q)$    \\ \hline
         $B(2,0,81,q) \xrightarrow{T_2}$    & $B(1,0,162,q)$   \\ \hline
         $B(11,10,17,q) \xrightarrow{T_2}$      & $B(9,6,19,q)$     \\ \hline
				\end{tabular}\hspace{1cm}
			 \begin{tabular}{|l|c|} \hline  
								$B(1,0,162,q) \xrightarrow{T_3}$   & $B(1,0,18,q^3)$         \\ \hline
								$B(9,6,19,q) \xrightarrow{T_3}$     & $B(1,0,18,q^3)$    \\ \hline
         $B(2,0,81,q) \xrightarrow{T_3}$    & $B(2,0,9,q^3)$   \\ \hline
         $B(11,10,17,q) \xrightarrow{T_3}$      & $B(2,0,9,q^3)$    \\ \hline
       \end{tabular}
\end{center}
\end{table}

   Using the above tables, we see that $A(q)$ is an eigenform for $T_2$ with eigenvalue $1$, and an eigenform for $T_3$ with eigenvalue 0.\\
   We have shown $A(q)$ is an eigenform for all Hecke operators, and have found the corresponding eigenvalues. Lastly we note that the above tables also show $g(q)-h(q)$ is an eigenform for all $T_p$.\\

   We now state criteria to determine when $(a,b,c,p)>0$ for a form of discriminant $-648$. We examine the factorization of the Weber class polynomial $W_{-648}(x)=x^6 - 7758x^5 - 17217x^4 - 25316x^3 - 17217x^2 - 7758x + 1$ modulo $p$. Following the method of \cite{cox} and \cite{voight}, we find that for a prime $p$ with $\left(\tf{-648}{p}\right)=1$, we have
      \begin{enumerate}
   \item{ $p$ is represented by the form $(1,0,162)$ if and only if $W_{-648}(x)$ splits completely modulo $p$,}\\
   
   \item{ $p$ is represented by the form $(9,6,19)$  if and only if $W_{-648}(x)$ factors into two irreducible cubic polynomials modulo $p$,}\\   
   
   \item{$p$ is represented by the form $(2,0,81)$  if and only if $W_{-648}(x)$ factors into three irreducible quadratic polynomials modulo $p$,}\\   
   
   \item{ $p$ is represented by the form $(11,10,17)$  if and only if $W_{-648}(x)$ remains irreducible modulo $p$.}\\   
   \end{enumerate}

   We define $S_1$ to be the set of primes $p\neq 2$ with $(1,0,162,p)+(2,0,81,p)>0$. Similarly, we let $S_2$ be the set of primes $p$ with $(9,6,19,p)+(11,10,17,p)>0$. We also take $S_3$ to be the set of primes $p$ with $\left(\tf{-648}{p}\right)=-1 $.    
      Employing \eqref{heckedef4} along with the previously computed eigenvalues, we obtain
      
         \begin{equation}
         \label{648alpha}
      [q^{p^\nu}]A(q) = \left\{ \begin{array}{ll}
       0 &  p=3, ~~\nu>0, \\
       1 &  p=2, \\
       1+\nu &  p \in S_1, \\
       U(\nu) &  p \in S_2,\\
       \tf{1+(-1)^\nu}{2} &   p\in S_3,
     \end{array}
     \right.
\end{equation}
where
\begin{equation}
   U(n):=\df{\sin(2\pi(n+1)/3)}{\sin(2\pi/3)}.
   \end{equation}
The function $U(n)$ is discussed in \ref{135sub}, and the explicit values of $U(n)$ can be found in Table \ref{si}.

   Given a positive integer $n$ we write
     \begin{equation}
  \label{648factorization}
  n=2^a3^b\prod_{p_1 \in S_1}p_1^{\operatorname{ord}_{p_1}(n)}\prod_{p_2 \in S_2}p_2^{\operatorname{ord}_{p_2}(n)}
  \prod_{p_3 \in S_3}p_3^{\operatorname{ord}_{p_3}(n)},
  \end{equation}
  where $a=\operatorname{ord}_{2}(n)$ and $b=\operatorname{ord}_{3}(n)$.
  Given the factorization of $n$ in \eqref{648factorization}, we employ \eqref{648alpha} to find the coefficient $[q^n]A(q)$ to be
  \begin{equation}
  \label{aform648}
  \delta_{b,0}\cdot\prod_{p_1 \in S_1}(1+{\operatorname{ord}_{p_1}(n)})\prod_{p_2 \in S_2}U(\operatorname{ord}_{p_2}(n))\prod_{p_3 \in S_3}\df{1+(-1)^{\operatorname{ord}_{p_3}(n)}}{2}.
   \end{equation}
  Define $r_{i}$ to be the number of primes $p_2 \in S_2$ such that $\operatorname{ord}_{p_2}(n)\equiv i\pmod{3}$.\\
   
   \noindent
   Given the factorization of $n$ in \eqref{648factorization} we have,
   \begin{equation}
   \prod_{p_2 \in S_2}U(\operatorname{ord}_{p_2}(n)) =\delta_{r_2,0}\cdot (-1)^{r_1},
   \end{equation}
   and we rewrite \eqref{aform648} as
   \begin{equation}
  \label{aform6482}
   [q^n]A(q) = (-1)^{r_1}\cdot\delta_{b+r_2,0}\cdot \prod_{p_1 \in S_1}(1+{\operatorname{ord}_{p_1}(n)})\prod_{p_3 \in S_3}\df{1+(-1)^{\operatorname{ord}_{p_3}(n)}}{2}.
   \end{equation}
   \noindent
 Both \eqref{aform648} and\eqref{aform6482} give a formula for the Fourier coefficients of $A(q)$, and to extract the Fourier coefficients of the cusp forms $g(q)$ and $h(q)$ we employ congruences. Note that $[q^n]g(q)\neq 0$ implies $n \equiv 1 \pmod{3}$, and $[q^n]h(q)\neq 0$ implies $n \equiv 2 \pmod{3}$. We have
     
   \[
[q^n]g(q)= \left\{ \begin{array}{ll}
       [q^n]A(q) &  n\equiv 1 \imod{3}, \\
       0 &  n\equiv 0,2 \imod{3}, \\
     \end{array}
     \right.
\]
and
   \[
[q^n]h(q)= \left\{ \begin{array}{ll}
       [q^n]A(q) &  n\equiv 2 \imod{3}, \\
       0 &  n\equiv 0,1 \imod{3}. \\
     \end{array}
     \right.
\]

 \subsection{Eta-Quotients of Level 1024}
   \label{1024}
Taking $m=1, k=64$ in Theorem \ref{4mthm} and $m=8, s=5$ in Theorem \ref{psipsithm} yields
 \begin{equation}
   \label{di}
  \frac{B(1,0,256,q)-B(4,4,65,q)}{2} =q\psi(q^8)\phi(-q^{64}),
   \end{equation}
	and
\begin{equation}
   \label{di2}
  \frac{B(5,4,52,q)-B(13,4,20,q)}{2} =q^5\psi(-q^8)\psi(-q^{32}).
   \end{equation}

We find CL$(-1024) \cong C_8 $ and we list the reduced forms according to their genus
\begin{center}
\begin{tabular}{ | l | l | l | l | }
  \hline     
  \multicolumn{2}{|c|}{CL$(-1024) \cong C_8$} & $\left(\tf{-1}{p}\right)$ & $\left(\tf{2}{p}\right)$ \\
  \hline                   
  Principal Genus & (1,0,256), (4,4,65), (16,8,17), (16,-8,17) &+1 &+1 \\ \hline
  Second Genus & (5,4,52), (5,-4,52), (13,4,20), (13,-4,20) & +1 &$-1$ \\ \hline  
\end{tabular}.
\end{center}
In the above table, $p$ is taken to be odd so that it is coprime to $-1024=2^{10}$. Before moving on, we remark that the \textit{eta}-quotients $q\psi(q^8)\phi(-q^{64})$ and $q^5\psi(-q^8)\psi(-q^{32})$ are considered in \cite[p.232]{kohler}.
Using Table \ref{sunwilmc}, we find  
\begin{equation}
   \label{aone}
  A(q):=\frac{B(1,0,256,q)-B(4,4,65,q)+\sqrt{2}(B(5,4,52,q)-B(13,4,20,q))}{2},
   \end{equation}
to be multiplicative. The action of $T_p$ on $A(q)$ is given in Table \ref{tptab21024} for primes $p$ with $\left( \tf{-1024}{p}\right)=1$.
   \begin{table}[htb]  
	\caption{} \label{tptab21024}
     \begin{center} 
     \resizebox{.98\hsize}{!}{
       \begin{tabular}{|l|c|c|c|c|c|}              \hline 
                  & $(1,0,256,p)>0$ & $(4,4,65,p)>0$ & $(16,8,17,p)>0$ & $(5,4,52,p)>0$ & $(13,4,20,p)>0$\\ \hline
         $A(q) \xrightarrow{T_p}$   & $2A(q)$    & $-2A(q)$ & 0 &$\sqrt{2}A(q)$   & $-\sqrt{2}A(q)$      \\ \hline
       \end{tabular}
       }
			\begin{flushright}
			 .
			 \end{flushright}
       \label{}
     \end{center}
   \end{table}
	
	 If $\left(\tf{-1024}{p}\right)=-1$, then $A(q)$ is an eigenform for $T_p$ with eigenvalue 0. $A(q)$ is an eigenform for $T_2$  with eigenvalue $0$.

   Using \eqref{heckedef4} with our computed eigenvalues, we find 

\begin{equation}
   \label{272a1def}
[p^{\nu}]A(q) = \left\{ \begin{array}{ll}
 0 &  p=2,~~\nu>0, \\
       1+\nu &  (1,0,256,p)>0, \\
       (-1)^{\nu}(1+\nu) &  (4,4,65,p)>0, \\
       U(\nu) &  (5,4,52,p)>0, \\
       V(\nu) &  (13,4,20,p)>0, \\
       (-1)^{\tf{\nu}{2}}\frac{(-1)^{\nu}+1}{2} &  (16,8,17,p)>0, \\
       \frac{(-1)^{\nu}+1}{2} &  \left(\frac{-1024}{p}\right)=-1,
     \end{array}
     \right.
\end{equation}

with

\begin{equation}
   U(n):=\df{\sin(\pi(n+1)/4)}{\sin(\pi/4)},
   \end{equation}
   and
   \begin{equation}
   V(n):=\df{\sin(3\pi(n+1)/4)}{\sin(3\pi/4)}.
   \end{equation}

	\noindent
The functions $U(n)$ and $V(n)$ are periodic and we tabulate their values in Table \ref{272uv}.

      \begin{table}[htb]  
			\caption {} 	\label{272uv} 
     \begin{center} 
       \begin{tabular}{|l|c|c|}              \hline 
         $n$ & $U(n)$ & $V(n)$ \\ \hline
         $n \equiv 0 \pmod{8}$   & $1$    & $1$            \\ \hline
         $n \equiv 1 \pmod{8}$     & $\sqrt{2}$    & $-\sqrt{2}$           \\ \hline
         $n \equiv 2 \pmod{8}$      & $1$    & $1$     \\ \hline
         $n \equiv 3 \pmod{8}$      & $0$   & $0$     \\ \hline
         $n \equiv 4 \pmod{8}$      & $-1$   & $-1$     \\ \hline
         $n \equiv 5 \pmod{8}$      & $-\sqrt{2}$    & $\sqrt{2}$     \\ \hline
         $n \equiv 6 \pmod{8}$      & $-1$    & $-1$     \\ \hline
         $n \equiv 7 \pmod{8}$      & $0$    & $0$     \\ \hline
       \end{tabular}
			\begin{flushright}
			 .
			 \end{flushright}
     \end{center}
   \end{table}

		We mention that the Weber class polynomial for discriminant $-1024$ is
   \begin{align*}
  W_{-1024}(x):=& x^8 - 2363648x^7 - 14141504x^6 - 33443840x^5 - 9272384x^4\\
  &-6554624x^3 - 493568x^2 - 278528x - 128.
  \end{align*}
	The factorization pattern of $W_{-1024}(x) \imod{p}$ does not distinguish between primes represented by $(5,4,52)$ and $(13,4,20)$. As in previous examples, one can use the remainder criteria of \cite{cox} and \cite{voight} to distinguish between primes that are represented by different forms of discriminant $-1024$.\\

 Define $S_1$ to be the set of primes $p$ represented by $(1,0,256)$ or $(4,4,65)$. We also define $S_2,S_3$ to be the set of primes $p$ represented by $(5,4,52)$, $(13,4,20)$, respectively. Lastly, we let $S_4$ be the set of primes $p$ with $(16,8,17,p)>0$ or $\left(\tf{-1024}{p}\right)=-1$.

   We find that the formula for $[q^n]A(q)$ is given by
   
\begin{equation}
\label{272a1form}
(-1)^{t+\tf{s}{2}}\cdot\delta_{a,0}\prod_{p_1 \in S_1}(1+\operatorname{ord}_{p_1}(n))\prod_{p_2 \in S_2}U(\operatorname{ord}_{p_2}(n))\prod_{p_3 \in S_3}V(\operatorname{ord}_{p_3}(n))\prod_{p_4 \in S_4}\frac{(-1)^{\operatorname{ord}_{p_4}(n)}+1}{2},
\end{equation}
where $a=\operatorname{ord}_{2}(n)$, $t$ is the number of primes factors $p$ of $n$, counting multiplicity, that are represented by $(4,4,65)$, and $s$ is the number of primes factors of $n$, counting multiplicity, that are represented by $(16,8,17)$. \\
	
	 As mentioned earlier, we are able to employ congruences to extract the coefficients of $q\psi(q^8)\phi(-q^{64})$ and $q^5\psi(-q^8)\psi(-q^{32})$ . We obtain
   \[
[q^n]q\psi(q^8)\phi(-q^{64})=\left\{ \begin{array}{ll}
       [q^n]A(q)&  n\equiv 1\pmod{8}, \\
       0 &  \mbox{otherwise},\\
     \end{array}
     \right.
\]

and

 \[
[q^n]q^5\psi(-q^8)\psi(-q^{32})=\left\{ \begin{array}{ll}
       [q^n]A(q)/\sqrt{2} &  n\equiv 5\pmod{8}, \\
       0 &  \mbox{otherwise}.\\
     \end{array}
     \right.
\]

Given a positive integer $n$, we define $r_{i}$ to be the number of prime factors of $n$ with $p_2 \in S_2$ such that $\operatorname{ord}_{p_2}(n)\equiv i\pmod{8}$. Similarly, $s_{i}$ is the number of prime factors of $n$ with $p_3 \in S_3$ and $\operatorname{ord}_{p_3}(n)\equiv i\pmod{8}$.\\

\noindent
   With the notation of $r_i$ and $s_i$ we have
   \begin{equation}
   \prod_{p_2 \in S_2}U(\operatorname{ord}_{p_2}(n)) =\delta_{r_3+r_7,0} \cdot(-1)^{r_4+r_5+r_6}\cdot 2^{\tf{r_1+r_5}{2}},
   \end{equation}
   and
   \begin{equation}
   \prod_{p_3 \in S_3}V(\operatorname{ord}_{p_3}(n)) = \delta_{s_3+s_7,0} \cdot(-1)^{s_1+s_4+s_6}\cdot 2^{\tf{s_1+s_5}{2}},
   \end{equation}
   where $\delta_{i,j}$ is 1 if $i=j$ and 0 otherwise. Letting $k_i:=r_i+s_i$ for $i=1,2$, we obtain
   \begin{equation}
   \label{272calc1}
   \prod_{p_2 \in S_2}U(\operatorname{ord}_{p_2}(n))\prod_{p_3 \in S_3}V(\operatorname{ord}_{p_3}(n)) =\delta_{k_3+k_7,0}\cdot (-1)^{s_1+r_5+k_4+k_6}\cdot 2^{\tf{k_1+k_5}{2}}
   \end{equation}
   and
   \begin{equation}
   \label{272calc2}
   \prod_{p_2 \in S_2}V(\operatorname{ord}_{p_2}(n))\prod_{p_3 \in S_3}U(\operatorname{ord}_{p_3}(n)) =\delta_{k_3+k_7,0} \cdot(-1)^{r_1+s_5+k_4+k_6}\cdot 2^{\tf{k_1+k_5}{2}}.
   \end{equation}
   Using \eqref{272calc1} and \eqref{272calc2} we reformulate \eqref{272a1form} as
   \begin{equation}
\label{272alform2}
[q^n]A(q) = (-1)^{t+\tf{s}{2}+s_1+r_5+k_4+k_6}\cdot\delta_{a+k_3+k_7,0}\cdot 2^{\tf{k_1+k_5}{2}}\prod_{p_1 \in S_1}(1+\operatorname{ord}_{p_1}(n))\prod_{p_4 \in S_4}\frac{(-1)^{\operatorname{ord}_{p_4}(n)}+1}{2}.
\end{equation}
We obtain
\begin{equation}
[q^n]2q\psi(q^8)\phi(-q^{64})=[q^n]A(q)(1+(-1)^{k_1+k_5}),
   \end{equation}
      \begin{equation}
[q^n]2\sqrt{2}q^5\psi(-q^8)\psi(-q^{32})=[q^n]A(q)(1-(-1)^{k_1+k_5}),
   \end{equation}
thus the Fourier coefficients of $2q\psi(q^8)\phi(-q^{64})$ are given by
   \begin{equation}
(1+(-1)^{k_1+k_5})(-1)^{b+t+\tf{s}{2}+s_1+r_5+k_4+k_6}\cdot\delta_{a+k_3+k_7,0}\cdot 2^{\tf{k_1+k_5}{2}}\prod_{p_1 \in S_1}(1+\operatorname{ord}_{p_1}(n))\prod_{p_4 \in S_4}\frac{(-1)^{\operatorname{ord}_{p_4}(n)}+1}{2}.
   \end{equation}
and the Fourier coefficients of $2\sqrt{2}q^5\psi(-q^8)\psi(-q^{32})$ are given by
   \begin{equation}
(1-(-1)^{k_1+k_5})(-1)^{b+t+\tf{s}{2}+s_1+r_5+k_4+k_6}\cdot\delta_{a+k_3+k_7,0}\cdot 2^{\tf{k_1+k_5}{2}}\prod_{p_1 \in S_1}(1+\operatorname{ord}_{p_1}(n))\prod_{p_4 \in S_4}\frac{(-1)^{\operatorname{ord}_{p_4}(n)}+1}{2}.
   \end{equation}

   \subsection{An Eta-Product of Level 1872}
   \label{-1872}
	
	Applying Theorem \ref{6mthm} with $m=12$ and $s=7$, yields
   \begin{equation}
   \label{diff1872}
   \df{B(72,12,7,q) - B(72,60,19,q)}{2}=\df{B(7,2,67,q) - B(19,16,28,q)}{2}=q^{7}E(Q)E(Q^{13}),
   \end{equation}
   where we define $Q:=q^{12}$.\\
   We have CL$(-1872) \cong C_4 \times C_4$ and
\begin{center}
\begin{tabular}{ | l | l | l | l | l |}
  \hline     
  \multicolumn{2}{|c|}{CL$(-1872) \cong C_4 \times C_4$}& $\left(\tf{p}{3}\right)$ & $\left(\tf{p}{13}\right)$ & $\left(\tf{-1}{p}\right)$ \\
  \hline                   
  Principal Genus & $(1,0,468)$, $(4,0,117)$,$(9,0,52)$,$(13,0,36)$ &$+1$ &$+1$ &$+1$ \\ \hline
  Second Genus & $(7,2,67)$, $(7,-2,67)$, $(19,16,28)$, $(19,-16,28)$ &$+1$ &$-1$ &$-1$ \\ \hline
  Third Genus & $(8,4,59)$, $(8,-4,59)$, $(11,8,44)$, $(11,-8,44)$ &$-1$ &$-1$ &$-1$ \\ \hline 
  Fourth Genus & $(9,6,53)$, $(9,-6,53)$, $(17,10,29)$, $(17,-10,29)$ &$-1$ &$+1$ &$+1$ \\ \hline   
\end{tabular}
\begin{flushright}
			 .
			 \end{flushright}
   \end{center}
	In the above table, $p$ is taken to be coprime to $-1872$ and represented by the given genus.
   
   Using the bottom row of Table \ref{sunwilmc} with $A=(7,2,67), B=(11,8,44)$ we find
   \begin{equation}
   \label{m11872def}
   A(q):=\df{B(1,0,468,q)+B(13,0,36,q)-B(4,0,117,q)-B(9,0,52,q)}{2}+ 2q^{7}E(Q)E(Q^{13})
   \end{equation}
   is multiplicative.

    Appropriately applying \eqref{mod31}, \eqref{phieven}, and \eqref{mod32} to
   \begin{equation}
	\label{simp187233}
   \df{B(1,0,468,q)+B(13,0,36,q)-B(4,0,117,q)-B(9,0,52,q)}{2},
   \end{equation}
	we see a wonderful cancellation that transforms \eqref{simp187233} into
   \begin{equation}
   \label{simp21872}
   q\left[\phi(Q^{39})f(Q^2,Q^4) + Q\phi(Q^3)f(Q^{26},Q^{52}) -2Q^5\psi(Q^6)f(Q^{13},Q^{65})-2Q^{10}\psi(Q^{78})f(Q,Q^5)\right].
   \end{equation}
    Since $Q:=q^{12}$, the Fourier expansion of \eqref{simp21872} only contains terms with exponents congruent to $1\imod{12}$. Similarly, the Fourier expansion of $q^{7}E(Q)E(Q^{13})$ only contains terms with exponents congruent to $7\imod{12}$. Hence congruences can be employed to extract coefficients of \eqref{diff1872} from the completion \eqref{m11872def}.\\
    
    The expression \eqref{simp21872} is also discussed in \cite[p.181]{kohler} where the eta product $q^7E(Q)E(Q^{13})$ is related to the quadratic fields $\mathbb{Q}(\sqrt{-3}),\mathbb{Q}(\sqrt{-13})$, and $\mathbb{Q}(\sqrt{-39})$.\\
		
    In \cite{gord}, Gordon and Hughes consider the product $q^{7}E(Q)E(Q^{13})$, and introduce the function
    \begin{equation}
   \label{ghc}
   h(q)=q\left[\phi(-Q^{39})f(Q^2,Q^4) + Q\phi(-Q^3)f(Q^{26},Q^{52}) -Q^5\psi(Q^6)f(Q^{13},Q^{65})-Q^{10}\psi(Q^{78})f(Q,Q^5)\right].
   \end{equation}
   One can verify $h(q) + 2q^{7}E(Q)E(Q^{13})$ is an eigenform for some, but not all Hecke operators. Indeed, $h(q) + 2q^{7}E(Q)E(Q^{13})$ is not an eigenform for $T_7, T_{11}, T_{17}$, among others. We note the striking similarlity between \eqref{simp21872} and $h(q)$. We also comment that Gordon and Hughes were well aware that $h(q)$ was not an eigenform for all $T_p$, and did not make any erroneous claims regarding the action of $T_p$ on $h(q)$.\\

   The action of $T_p$ on the forms of discriminant $-1872$ are omitted since the computations are analogous to previous examples. The action of $T_p$ on $A(q)$ is given in Table \ref{tabb} for primes $p$ with $\left( \tf{p}{3}\right)=\left( \tf{-13}{p}\right)=1$. Note the property $\left( \tf{p}{3}\right)=\left( \tf{-13}{p}\right)=1$ is equivalent to the prime $p\neq13$ being represented by the principal genus or second genus, according to the table of genera at the beginning of this example.
	
   \begin{table}[htb]  
	\caption{} \label{tabb}
     \begin{center} 
     \resizebox{.98\hsize}{!}{
       \begin{tabular}{|l|c|c|c|c|c|c|}              \hline 
                  & $(1,0,468,p)>0$ & $(4,0,117,p)>0$ & $(13,0,36,p)>0$ & $(9,0,52,p)>0$ & $(7,2,67,p)>0$& $(19,16,28,p)>0$\\ \hline
         $A(q) \xrightarrow{T_p}$   & $2A(q)$    & $-2A(q)$ & $2A(q)$ & $-2A(q)$    & $2A(q)$    & $-2A(q)$    \\ \hline
       \end{tabular}
       }
			\begin{flushright}
			 .
			 \end{flushright}
     \end{center}
   \end{table}.
	
	For any prime $p\neq 13$ that does not have $\left( \tf{p}{3}\right)=\left( \tf{-13}{p}\right)=1$, we find $A(q)$ is an eigenform under $T_p$ with eigenvalue 0. $A(q)$ is an eigenform for $T_{13}$ with eigenvalue 1. Hence $A(q)$ is an eigenform for all Hecke operators. Employing \eqref{heckedef4} we obtain
	
   \begin{equation}
   \label{1872a1def}
[q^{p^{\nu}}]A(q) = \left\{ \begin{array}{ll}
       0 &  p=2,3,~~ \nu > 0,\\
       1 &  p=13, \\
       1+\nu &  p \neq 13 ,(1,0,468,p)+(13,0,36,p)+ (7,2,67,p)>0, \\
       (-1)^{\nu}(1+\nu) &  (4,0,117,p)+ (9,0,52,p)+ (19,16,28,p)>0, \\
       (-1)^{\frac{\nu}{2}}\frac{(-1)^{\nu}+1}{2} & -\left(\frac{p}{3}\right)=\left(\frac{-13}{p}\right)=1,\\
       \frac{(-1)^{\nu}+1}{2} &  \left(\frac{-1872}{p}\right)=-1,
     \end{array}
     \right.
\end{equation}

   Since $|$CL$(-1872)| = 16$, the associated Weber class polynomial, $W_{-1872}(x)$, is a degree 16 polynomial. Explicitly,
   \begin{align*}
  W_{-1872}(x):=& x^{16}-8x^{15}+24x^{14}-34x^{13}+x^{12}+246x^{11}-1094x^{10}+2574x^9-4200x^8+5608x^7\\
  &-5144x^6+858x^5+4189x^4-5166x^3+2814x^2-750x+69.
  \end{align*}
   
   Following \cite{cox} and \cite{voight}, one can use the remainder criteria that we have seen in previous examples to determine which primes are represented by a given form of discriminant $-1872$. The coefficients in this criteria become rather large and so we omit the computations. We remark that the procedure is completely analogous to previous examples.\\
		
		Let $S_1$ be the set of primes $p$ with $\left( \tf{p}{3}\right)=\left( \tf{-13}{p}\right)=1$. Employing \eqref{1872a1def} we obtain
   
\begin{equation}
\label{1872alform}
[q^n]A(q) = (-1)^{t_1+\tf{s}{2}}\cdot\delta_{\operatorname{ord}_{2}(n)+\operatorname{ord}_{3}(n),0}\prod_{p_1 \in S_1}(1+\operatorname{ord}_{p_1}(n))\prod_{p_2 \notin S_1 \cup \{2,3,13\}}\frac{(-1)^{\operatorname{ord}_{p_2}(n)}+1}{2},
\end{equation}
   where $n$ has $t_1$ prime factors with $(4,0,117,p)+ (9,0,52,p)+ (19,16,28,p)>0 $, $t_2$ prime factors with $(4,0,117,p)+ (9,0,52,p)+ (7,2,67,p)>0 $, $s$ prime factors with $-\left(\frac{p}{3}\right)=\left(\frac{-13}{p}\right)=1$, and the right most product is taken over primes $p_2 \notin S_1 \cup \{2,3,13\}$. All prime factors are counted with multiplicity, and similar to the previous example, $s$ odd implies \eqref{1872alform} vanish.\\
	
 As mentioned earlier, we can write the Fourier coefficients of $q^7E(Q)E(Q^{13})$ by employing congruences. We have
\[
[q^n]q^7E(Q)E(Q^{13})=\left\{ \begin{array}{ll}
       \tf{[q^n]A(q)}{2} &  n\equiv 7 \imod{12}, \\
       0 &  \mbox{otherwise}. \\
     \end{array}
     \right.
\]   
   
We arrive at the formula 
   \begin{equation}
\tf{(1-(-1)^{t_1+t_2})}{4}(-1)^{t_1+\tf{s}{2}}\cdot\delta_{\operatorname{ord}_{2}(n)+\operatorname{ord}_{3}(n),0}\prod_{p_1 \in S_1}(1+\operatorname{ord}_{p_1}(n))\prod_{p_2 \notin S_1 \cup \{2,3,13\}}\frac{(-1)^{\operatorname{ord}_{p_2}(n)}+1}{2}
   \end{equation}
for the Fourier coefficient $[q^n]q^7E(Q)E(Q^{13})$.

  \section{Concluding Remarks}
   \label{outlook}
	
	We note that the \textit{eta}-product of level 71, discussed in section \ref{examples71}, has many similarities with the \textit{eta}-product of level 47, discussed in section \ref{examples}, and also with the \textit{eta}-product of level 23 which has been previously discussed \cite{blij}. In the future we hope to continue the discussion of the \textit{eta}-products $q^{\tf{p+1}{24}}E(q)E(q^p)$ of level $p \equiv -1 \imod{24}$ where $p$ is prime.\\

	We lastly comment that the authors purposefully did not include any examples which are associated with a class group isomorphic to $C_4 \times C_2^r$, with $0\leq r \leq 4$. Much more can be said for these types of examples, and the authors hope to discuss these examples in a separate paper.

   \section{Acknowledgement}
   \label{ack}
   
   We are grateful to Hamza Yesilyurt, John Voight, Li-Chien Shen, and Kenneth Williams for helpful discussions, and to Keith Grizzell and Elizabeth Loew for a careful reading of the manuscript. We would like to thank Jean-Pierre Serre for bringing references \cite{blij}, \cite{fricke}, and \cite{shoe} to our attention. Lastly we would like to thank the anonymous referee for many valuable suggestions.


\begin{thebibliography}{99}

   \bibitem{berk1}
     A.~Berkovich and H.~Yesilyurt, \emph{Ramanujan's identities and representation of integers by certain binary and quaternary quadratic forms}, Ramanujan J., \textbf{20}, no. 3 (2009), 375--408

   \bibitem{berk2}
     A.~Berkovich and H.~Yesilyurt, \emph{On Rogers-Ramanujan functions, binary quadratic forms and \textit{eta}-quotients}, arXiv:1204.1092~[math.NT], accepted for publication in Proc. of A.M.S.
     
     
     \bibitem{ramnote}
       B.~C.~Berndt,\emph{Ramanujan's Notebooks, Part III}, Springer, New York, 1991.
       
      \bibitem{blij}
     F.~Van~Der~Blij, \emph{Binary quadratic forms of discriminant $-23$}, Neder. Akad. Wetensch, \textbf{55} (1952), 498--503.
      
			
       \bibitem{buell}
        D.~Buell, \emph{Binary Quadratic Forms: Classical Theory and Modern Computations}, Springer-Verlag, New York, 1990.
			
\bibitem{ccl}
     H.~H.~Chan, S.~Cooper, W-C.~Liaw, \emph{On $\eta^3(a\tau)\eta^3(b\tau)$ with $a+b=8$}, J. Aust. Math. Soc. \textbf{84} (2008), 301--313. 
			
			
\bibitem{ernest}
     O.~X.~M.~Yao, E.~X.~W.~Xia, J.~Jin, \emph{Explicit Formulas for the Fourier Coefficients of a Class of Eta Quotients}, Int. J. Number Theory, \textbf{9} (2013), 487--503.
			
       \bibitem{cox}
        D.~A.~Cox,\emph{Primes of the Form $x^2 +ny^2$: Fermat Class Field Theory and Complex Multiplication}, John Wiley \& Sons, New York, 1989.
       
       \bibitem{dickson}
        L.~E.~Dickson, \emph{Introduction to the Theory of Numbers}, Dover, New York, 1957.

      \bibitem{fricke}
     R.~Fricke, \emph{Lehrbuch der Algebra}, vol III, Friedr. Vieweg, 1924.
     
     \bibitem{gord}
     B.~ Gordon, K.~Hughes, \emph{Multiplicative properties of $\eta$ -products II}, Contemporary Mathematics
\textbf{143} (1993), 415--430.

\bibitem{hecke}
E.~Hecke,
\emph{Mathematische Werke},
Vandenhoeck \& Ruprecht, G\"{o}ttingen, 1970.



       \bibitem{kohler}
        G.~K{\"o}hler, \emph{Eta Products and Theta Series Identities}, Springer, New York, 2011.
				
				
				
				
     \bibitem{okamoto}
     A.~Okamoto, \emph{On expressions of theta series by $\eta$-products}, Tokyo J. Math., \textbf{34}, no. 2 (2011), 319--326.




    \bibitem{shoe}
     B.~Schoeneberg, \emph{Bemerkungen {\"u}ber einige klassen von modulfunktionen}. Neder. Akad. W. Proc., A, \textbf{70} (1967), 177--182.


    \bibitem{serre}
     J-P.~Serre, \emph{Modular forms of weight one and Galois representations}. In: A. Froelich ed. Algebraic Number Fields, (1977), 193--268.
		
		


 \bibitem{siegel}
     C.~L.~Siegel, \emph{{\"U}ber die analytische theory der quadratische Formen}. Ann. of Math, \textbf{36} (1935), 527--606.
		
		 \bibitem{sun}
     Z.~H.~Sun, \emph{The expansion of $\prod_{k=1}^\infty(1-q^{ak})(1-q^{bk})$}, Acta Arith., \textbf{134}, no. 1 (2008), 11--29.

   \bibitem{sunwil}
     Z.~H.~Sun, K.~S.~Williams, \emph{On the number of representations of $n$ by $ax^2 +bxy + cy^2$}, Acta Arith., \textbf{122}, no. 2 (2006), 101--171.

		
   \bibitem{sunwil2}
     Z.~H.~Sun, K.~S.~Williams, \emph{Ramanujan identities and Euler products for a type of Dirichlet series}, Acta Arith., \textbf{122}, no. 4 (2006), 349--393.
		
               \bibitem{voight}
							J. Voight, \emph{Quadratic forms that represent almost the same primes}, Math. Comp. \textbf{76 }(2007), 1589--1617.
		
		
               \bibitem{williams}
     K.~S.~Williams, \emph{Fourier series of a class of eta quotients.}, Int. J. Number Theory, \textbf{8} (2012), 993--1004.
     
   \end{thebibliography}
\end{document}